\newtheorem{theorem}{Theorem}
\newtheorem{lemma}{Lemma}[section]
\newtheorem{proposition}[lemma]{Proposition}
\newtheorem{definition}[lemma]{Definition}
\newtheorem{remark}[lemma]{Remark}
\newtheorem*{problemL}{{\bf{Main Problem}}}
\theoremstyle{definition}
\newenvironment{pf}{{\noindent \it  Proof of Theorem 1.}}{{\hfill$\Box$}\\}
\def\beq#1\eeq{\begin{equation}#1\end{equation}}
\def\balign #1 #2 \ealign{\begin{aligned} #1 #2  \end{aligned} }
\def\Div{{\rm div}}
\def\sgn{{\rm sgn}}
\def\bu{\mathbf{u}}
\def\bv{\mathbf{v}}
\newcommand \alp{\alpha}
\newcommand \eps{\varepsilon}
\newcommand \Gam{\Gamma}
\newcommand \gam{\gamma}
\newcommand \om{\omega}
\newcommand \tx{\text}
\newcommand \R{\mathbb{R}}
\newcommand \til{\tilde}
\newcommand \der{\partial}
\newcommand \mcl{\mathcal}
\newcommand \mfrak{\mathfrak}
\newcommand \ol{\overline}
\newcommand \Om{\Omega}
\newcommand \corners{\Gamw}
\newcommand \Sen{S_{en}}
\newcommand \pex{p_{ex}}
\newcommand \Gamen{\Gam_0}
\newcommand \Gamex{\Gam_L}
\newcommand \Gamw{\Gam_w}
\newcommand \rx{{\rm x}}
\newcommand \ry{{\rm y}}
\def\msB{\mathscr{B}}
\def\msK{\mathscr{K}}
\def\mff{\mathfrak{f}}
\def\mfh{\mathfrak{h}}
\def\mfF{\mathfrak{F}}
\def\mfg{\mathfrak{g}}
\newcommand \rhos{\rho_c}
\newcommand \mfraka{{a}}
\newcommand \mfrakb{{b}}
\newcommand \mfrakc{{c}}
\newcommand \mfrakd{{d}}
\newcommand \mfu{\mathfrak{u}}
\newcommand \mfV{\mcl{V}}
\newcommand \mfG{\mathfrak{G}}
\newcommand \mfH{\mathfrak{H}}
\newcommand \mfrakL{\mathfrak{L}}
\newcommand \mclL{\mathcal{L}}
\newcommand \mG{\mcl{G}}
\numberwithin{equation}{section}
\begin{document}

\title[Subsonic self-gravitation flows]
{Two dimensional subsonic flows with self-gravitation in bounded domain}


\author{Myoungjean Bae}
\address{M. Bae, Department of Mathematics\\
         POSTECH\\
         San 31, Hyojadong, Namgu, Pohang, Gyungbuk, Korea}
\email{mjbae@postech.ac.kr}
\author{Ben Duan}
\address{B. Duan, Department of Mathematics\\
         POSTECH\\
         San 31, Hyojadong, Namgu, Pohang, Gyungbuk, Korea}
\email{bduan@postech.ac.kr}

\author{Chunjing Xie}
\address{C. Xie, Department of mathematics, Institute of Natural Sciences, Ministry of Education Key Laboratory of Scientific and Engineering Computing, Shanghai Jiao Tong University\\
800 Dongchuan Road, Shanghai, China
}
\email{cjxie@sjtu.edu.cn}

\begin{abstract}
We investigate two dimensional steady Euler-Poisson system which describe the motion of compressible self-gravitating flows. The unique existence and stability of subsonic flows
in a duct of finite length are obtained when prescribing the entropy at the entrance and the pressure at the exit.
After introducing the stream function, the Euler-Poisson system can be decomposed into several transport equations and a second order nonlinear elliptic system.
We discover an energy estimate for the associated elliptic system which is a key ingredient to
prove  the unique existence and stability of subsonic flow. 

\end{abstract}

\keywords{Euler-Poisson system, subsonic flow,   gravitational, stream function,  existence, stability, elliptic system, $C^{1,\alp}$ regularity,   Lipschitz boundary}
\subjclass[2010]{
35J47, 35J57, 35J66, 35M10, 76N10}

\date{\today}

\maketitle

\section{Introduction and Main Results}
\label{section-1}
The motion of self-gravitating flows can be described by the Euler-Poisson system
\begin{equation}
\label{1-a1}
\left\{
\begin{aligned}
&\rho_t+\Div(\rho{\bf u})=0,\\
&(\rho{\bf u})_t+\Div(\rho{\bf u}\otimes {\bf u}+p{\bf I})=-\rho\nabla\Phi,\\
&(\rho{\bf u} E)_t+\Div(\rho{\bf u}(E+\frac{p}{\rho}))=-\rho{\bf u}\cdot\nabla\Phi,\\
&\Delta \Phi=\rho,
\end{aligned}
\right.
\end{equation}
where $\Div, \nabla$, and $\Delta$ are the divergence, gradient, and Laplacian operators with respect to spatial variables $\rx\in\R^n$. The unkowns  $\rho, {\bf u}, p, E$ and $\Phi$ in \eqref{1-a1} represent the density, velocity, pressure, total energy of the fluid and gravitational potential function, respectively. ${\bf I}$ is an $n\times n$ identity matrix. For ideal polytropic gas, \eqref{1-a1} is closed with the aid of definition of total energy $E$ by
\begin{equation*}
E=\frac{|{\bf u}|^2}{2}+\frac{p}{(\gam-1)\rho},
\end{equation*}
where $\gam>1$ is the adiabatic constant.

The blowup of classical solutions and the existence of global weak radial solutions for the system \eqref{1-a1} were obtained in \cite{Makino, Wang}. The global well-posedness for \eqref{1-a1} in general multidimensional setting is an outstanding challenging problem. Two important classes of isentropic steady solutions of \eqref{1-a1} were studied extensively previously. The first is the non-rotating star solutions which have zero velocity, see \cite{Chandra}. The second is the rotating star solutions whose velocity fields are axially symmetric, \cite{CaF, DLYY, Friedman,Li, LS1, LS2,  McCann}, etc.
The stability and instability of non-rotating and rotating star solutions were investigated in \cite{Jang, Lin, LS3, Rein}, and references therein.

In this paper we focus on structural stability of  steady subsonic solutions to \eqref{1-a1}
when the boundary data are two dimensional small perturbations of  one dimensional solutions.
In $\R^2$, let $u$ and $v$ denote the horizontal and vertical components of the velocity ${\bf u}= (u, v)$. Then, the steady Euler-Poisson system is written as
\begin{equation}
\label{1-a3}
\left\{
\begin{aligned}
&(\rho u)_x+(\rho v)_y=0,\\
&(\rho u^2+p)_x+(\rho u v)_y=-\rho \Phi_x,\\
&(\rho u v)_x+(\rho v^2+p)_y=-\rho \Phi_y,\\
&(\rho u \msB)_x+(\rho v \msB)_y=-\rho {\bf u}\cdot \nabla\Phi,\\
&\Delta\Phi=\rho,
\end{aligned}
\right.
\end{equation}
where the Bernoulli function $\msB$ is given by
\begin{equation*}
\msB=E+\frac{p}{\rho}=\frac{|{\bf u}|^2}{2}+\frac{\gam p}{(\gam-1)\rho}.
\end{equation*}
Set
\begin{equation}
\label{def-sk}
S=\ln \frac{p}{\rho^{\gam}}\quad \tx{and}\quad \msK=\msB+\Phi.
\end{equation}
$S$ is the entropy, and we call $\msK$ {\emph{the pseudo-Bernoulli function}}.
$(\rho, u, v, p, \Phi)\in (C^1)^4\times C^2$ solve \eqref{1-a3} if and only if they solve
\begin{align}
\label{1-a5}
&(\rho u)_{x_1}+(\rho v)_{x_2}=0,\\
\label{1-a6}
&(\rho uv)_{x_1}+(\rho v^2+p)_{x_2}=-\rho\Phi_{x_2},\\
\label{1-a7}
&{\bf u}\cdot\nabla S=0,\\
\label{1-a8}
&{\bf u}\cdot\nabla \msK=0,\\
\label{1-a9}
&\Delta\Phi=\rho,
\end{align}
for ${\bf u}=(u,v)$
provided that $\rho>0$ and $u>0$.

As a nonlinear system for $(\rho, u, v, p)$, \eqref{1-a5}--\eqref{1-a8} form a mixed type system, and its type depends on \emph{the Mach number} $M$ which is given
by $M=\frac{|\bu|}{c}$ with $c(\rho,p)=\sqrt{\frac{\gam p}{\rho}}$. Here, $c$ is called the \emph{local sound speed}. If $M<1$, the flow is said to be \emph{subsonic},
then \eqref{1-a5}--\eqref{1-a8} form an elliptic-hyperbolic coupled system. If $M>1$, the flow is said to be \emph{supsersonic} and \eqref{1-a5}--\eqref{1-a8} form a hyperbolic system.
In addition,
the Poisson's equation \eqref{1-a9} has a nonlocal effect to the other equations \eqref{1-a5}-\eqref{1-a8}, and it makes the fluid variables $\rho, {\bf u}, p$ and gravitational potential $\Phi$ interact in a highly nonlinear way.

The goal of this paper is to prove unique existence and stability of subsonic flows for
\eqref{1-a5}-\eqref{1-a9} in a duct when the entrance entropy and the exit pressure are prescribed by two dimensional small perturbations of one dimensional solutions.
For that purpose, we first study the one dimensional solutions of \eqref{1-a5}-\eqref{1-a9}.


\subsection{One dimensional solutions of \eqref{1-a5}-\eqref{1-a9}} Consider a solution $(\rho, u, v, p, \Phi)$ of  \eqref{1-a5}-\eqref{1-a9} with $v=0=\Phi_{x_2}$,
$\rho>0$ and $u>0$. Set $G:=\Phi_{x_1}$. Then $(\rho, u, S, G)$ satisfy
\begin{align}
\label{1-b1}
&(\rho u)'=0,\\
\label{1-b2}
&S'=0,\\
\label{1-b3}
&\msB'=-G,\\
\label{1-b4}
&G'=\rho,
\end{align}
where $'$ denotes the derivative with respect to $x_1$. It follows
from \eqref{1-b1} and \eqref{1-b2} that one has
\begin{equation}
\label{1-b5}
u=\frac{m_0}{\rho}\quad\text{and}\quad  S=S_0
\end{equation}
 with the constants $m_0>0$ and $S_0>0$ determined by the data at the entrance. Then \eqref{1-b3} and \eqref{1-b4} can be written as 
\begin{equation}
\label{1-b6}
\begin{cases}
\rho'=\frac{-\rho G}{\gam e^{S_0}\rho^{\gam-1}-\frac{m_0^2}{\rho^2}},\\
G'=\rho.
\end{cases}
\end{equation}
For fixed constants $\gam>1$, $m_0>0$, and $S_0>0$, set
\begin{equation*}
\rhos=\left(\frac{m_0^2}{\gam e^{S_0}} \right)^{\frac{1}{\gam+1}}.
\end{equation*}
\begin{proposition}
\label{proposition-1}
Fix constants $\gam>1$, $m_0>0$ and $S_0>0$.
\begin{itemize}
\item[(a)] For any given $\rho_0\in \R_+\setminus\{0,\rhos\}$ and $G_0\in\R$, there exists an $\bar{L}>0$ depending on $\gam$, $m_0$, $S_0$, $\rho_0$, $G_0$ such that
the ODE system \eqref{1-b6} with initial conditions
    \begin{equation}
    \label{1-b8}
\rho(0)=\rho_0\quad \text{and}\quad G(0)=G_0
    \end{equation}
    has a unique smooth solution $(\rho, G)$ on the interval $[0,L]$ whenever $L\le\bar L$, and
   \begin{equation*}
   \lim_{x_1\to \bar{L}-}(\rho, G)=(\rho_c, G_M)
   \end{equation*}
with
\begin{equation}
\label{1-c4}
G_M=\sqrt{2\left(e^{S_0}\rho_0^\gamma +\frac{m_0^2}{\rho_0}-\frac{\gamma+1}{\gamma}\left(\frac{m_0}{\gamma e^{S_0}}\right)^{-\frac{1}{\gamma+1}}m_0^2\right)+G_0^2}.
\end{equation}

\item[(b)] Let $\bar L(\rho_0, G_0)$ be the lifespan of the initial value problem \eqref{1-b6} and \eqref{1-b8}. If $\gam>2$, then
\begin{equation*}
\lim_{{\rho_0}\to \infty}\bar{L}(\rho_0, G_0)=\infty.
\end{equation*}
In other words, if $\gam>2$, for any $L>0$, there exists a nonempty set $\mfrak{P}_1\subset (\rhos,\infty)\times \R$ depending on $\gam$, $m_0$, $S_0$, $L$ so that whenever $(\rho_0,G_0)\in \mfrak{P}_1$, the problem \eqref{1-b6} and \eqref{1-b8} has a unique smooth solution $(\rho,G)$ on the interval $[0,L]$.
\end{itemize}
\end{proposition}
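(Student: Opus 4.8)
The plan is to derive, for each fixed $G_0$, a lower bound of the form $\bar L(\rho_0,G_0)\ge c\,\rho_0^{\gam/2-1}$ valid for all large $\rho_0$, with $c>0$ depending only on $\gam,m_0,S_0$ (and uniform for $|G_0|$ in a bounded set, if one wants a two–dimensional $\mfrak{P}_1$); since $\gam>2$ makes the exponent $\gam/2-1$ positive, this bound diverges as $\rho_0\to\infty$, which is exactly the content of (b). The starting point is that along any solution of \eqref{1-b6}, \eqref{1-b8} the Bernoulli function depends on $\rho$ alone: by \eqref{1-b5}, $\msB=\msB(\rho):=\frac{m_0^2}{2\rho^2}+\frac{\gam e^{S_0}}{\gam-1}\rho^{\gam-1}$, and since $\msB'(\rho)=\frac1\rho\bigl(\gam e^{S_0}\rho^{\gam-1}-\frac{m_0^2}{\rho^2}\bigr)$ the first line of \eqref{1-b6} is equivalent to $\frac{d}{dx_1}\msB(\rho(x_1))=-G(x_1)$, i.e. to \eqref{1-b3}. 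Two monotonicity facts enter: $\msB'(\rho)>0$ for $\rho>\rhos$, so $\msB$ is strictly increasing on $(\rhos,\infty)$; and $G'=\rho>0$, so $G$ is strictly increasing on the lifespan $[0,\bar L)$, increasing from $G_0$ to the limit value $G_M$ given in \eqref{1-c4}. From \eqref{1-c4} (equivalently, from the first integral $\frac12 G^2+e^{S_0}\rho^\gam+\frac{m_0^2}\rho\equiv\text{const}$ of \eqref{1-b6}, which encodes momentum balance) one has $G_M\ge|G_0|$, hence $|G|\le G_M$ throughout $[0,\bar L)$.

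The estimate itself is then short. Integrating $\frac{d}{dx_1}\msB(\rho)=-G$ over $[0,\bar L)$ and letting $x_1\to\bar L^-$, at which point $\rho(x_1)\to\rhos$ by Proposition \ref{proposition-1}(a), one gets
\[
\msB(\rho_0)-\msB(\rhos)=\int_0^{\bar L}G(s)\,ds\le\bar L\,G_M ,
\qquad\text{hence}\qquad
\bar L(\rho_0,G_0)\ge\frac{\msB(\rho_0)-\msB(\rhos)}{G_M}.
\]
Now one tracks the size in $\rho_0$: by \eqref{1-c4} the dominant term under the square root is $2e^{S_0}\rho_0^\gam$, so $G_M\le C_1\rho_0^{\gam/2}$ for $\rho_0$ large; while $\msB(\rho_0)\ge\frac{\gam e^{S_0}}{\gam-1}\rho_0^{\gam-1}$ and $\msB(\rhos)$ is a fixed constant, so $\msB(\rho_0)-\msB(\rhos)\ge C_2\rho_0^{\gam-1}$ for $\rho_0$ large. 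Combining, $\bar L(\rho_0,G_0)\ge C_3\,\rho_0^{\gam-1-\gam/2}=C_3\,\rho_0^{\gam/2-1}$ for all large $\rho_0$, with $C_3>0$ independent of $\rho_0$.

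When $\gam>2$ the exponent $\gam/2-1$ is positive, so $\bar L(\rho_0,G_0)\to\infty$ as $\rho_0\to\infty$, which is the first assertion of (b). For the existence set, fix $G_0$ (say $G_0=0$) and $L>0$, choose $\rho_0^*>\rhos$ with $\bar L(\rho_0,G_0)>L$ for all $\rho_0>\rho_0^*$, and put $\mfrak{P}_1:=(\rho_0^*,\infty)\times\{G_0\}$ — or, since the lower bound above is uniform for $|G_0|$ bounded, the fuller set $(\rho_0^*,\infty)\times[-1,1]$. This is a nonempty subset of $(\rhos,\infty)\times\R$ on which $\bar L>L$, so by Proposition \ref{proposition-1}(a) the problem \eqref{1-b6}, \eqref{1-b8} has a unique smooth solution on $[0,L]$.

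The substance of the argument is the mismatch of scales in $\rho_0$: the total variation of $\msB$ between $\rho_0$ and $\rhos$ grows like $\rho_0^{\gam-1}$, whereas the rate $|\msB'|=|G|$ stays below $G_M=O(\rho_0^{\gam/2})$ — and it is the explicit formula \eqref{1-c4} (or the momentum first integral) that provides this $O(\rho_0^{\gam/2})$ ceiling. Dividing one power into the other leaves $\rho_0^{\gam-1-\gam/2}$, which is unbounded precisely when $\gam>2$; the rest is elementary estimation with the closed forms of $\msB(\rho)$ and $G_M$. The one point requiring a word of care is that $\int_0^{\bar L}G$ is, strictly speaking, an integral over a half-open interval, but $|G|\le G_M$ makes it absolutely convergent and its value is pinned to $\msB(\rho_0)-\msB(\rhos)$ the moment Proposition \ref{proposition-1}(a) supplies both $\bar L<\infty$ and the boundary limit $\rho(\bar L^-)=\rhos$.
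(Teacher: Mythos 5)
Your argument addresses only part (b): the finiteness of $\bar L$, the boundary limit $\lim_{x_1\to\bar L-}(\rho,G)=(\rhos,G_M)$, and the formula \eqref{1-c4} are all quoted from part (a) rather than proved, and they are exactly what makes your identity $\int_0^{\bar L}G\,ds=\msB(\rho_0)-\msB(\rhos)$ and the bound $G\le G_M$ legitimate. As a proof of the proposition as stated this is a genuine gap: you still need the local unique solvability of \eqref{1-b6}, \eqref{1-b8} away from $\rho=\rhos$ (where the right-hand side is singular), the first integral \eqref{1-c1} with $H$ from \eqref{1-c2}, and the phase-plane continuation argument showing the trajectory with $\rho_0>\rhos$ persists, stays on one side of $\rho=\rhos$, and reaches $(\rhos,G_M)$ in finite length (the paper gets $\bar L\le(G_M-G_0)/\rhos$ from $G'=\rho\ge\rhos$). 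None of that is supplied, and citing Proposition \ref{proposition-1}(a) inside a proof of Proposition \ref{proposition-1} is circular unless you prove (a) first.

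Granting (a), your proof of (b) is correct and takes a slightly different route from the paper's. The paper integrates $G'=\rho$ and uses $\rho\le\rho_M$ to get $\bar L\ge\frac{G_M-G_0}{\rho_M}$, then sends $\rho_0\to\infty$ via $\frac{G_M}{\rho_M}=\frac{\sqrt{2H(\rho_M)}}{\rho_M}\sim\rho_M^{\gam/2-1}$; you instead integrate $\msB'=-G$ (equation \eqref{1-b3}) and use $G\le G_M$ to get $\bar L\ge\frac{\msB(\rho_0)-\msB(\rhos)}{G_M}\ge C\rho_0^{\gam-1-\gam/2}$. Both exploit the same mismatch of scales ($\rho_0^{\gam-1}$ of ``Bernoulli mass'' against a ceiling of order $\rho_0^{\gam/2}$) and give the same rate $\rho_0^{\gam/2-1}$; your version has the small advantages of an explicit rate and of uniformity for $|G_0|$ in a bounded set, so that $\mfrak{P}_1$ can be taken two-dimensional, e.g.\ $(\rho_0^*,\infty)\times[-1,1]$, whereas the paper's argument is phrased for fixed $G_0$. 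So: fix the missing part (a) (the paper's ODE and phase-plane analysis, or an equivalent), and your part (b) stands.
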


\begin{proof}
The right hand sides of \eqref{1-b6} are smooth functions of $(\rho, G)$ unless $\rho=\rho_c$. So the unique existence theorem of ODEs implies that if $\rho_0\neq \rho_c$, then \eqref{1-b6} with \eqref{1-b8} is uniquely solvable on $[0,L]$ for a small $L>0$.

If $(\rho, G)\in (C^1([0,L]))^2$ solve \eqref{1-b6} and \eqref{1-b8}, then
\begin{equation}
\label{1-c1}
H(\rho)+\frac 12 G^2=H(\rho_0)+\frac 12 G_0^2  
\end{equation}
holds on $[0,L]$ for $H(\rho)$ defined by
\begin{equation}
\label{1-c2}
H(\rho)=\int_{\rho_c}^{\rho}\gam e^{S_0}\varrho^{\gam-1}-\frac{m_0^2}{\varrho^2} \;d\varrho
=e^{S_0}\rho^{\gam} +\frac{m_0^2}{\rho}-\frac{\gam+1}{\gamma}\left(\frac{m_0^2}{\gam e^{S_0}} \right)^{-\frac{1}{\gam+1}} m_0^2.
\end{equation}
 It is easy to check that
\begin{equation}
\label{1-c3}
H(\rho)>0\quad \text{and}\quad
\sgn H'(\rho)=\sgn(\rho-\rhos)\;\;\tx{for}\;\;\rho>0.
\end{equation}
Thus we can draw the phase plane as Figure \ref{figure-1}.
\begin{figure}[htp]
\centering
\begin{psfrags}
\psfrag{g}[cc][][0.8][0]{$G$}
\psfrag{r}[cc][][0.8][0]{$\rho$}
\psfrag{rs}[cc][][0.8][0]{$\rho=\rhos$}
\psfrag{i}[cc][][0.8][0]{$\phantom{aaa}(\rho_0,G_0)$}
\psfrag{I}[cc][][0.8][0]{$\phantom{aaaaa}(\rho_0,G_0)\phantom{a}$}
 \hfill\includegraphics[scale=.8]{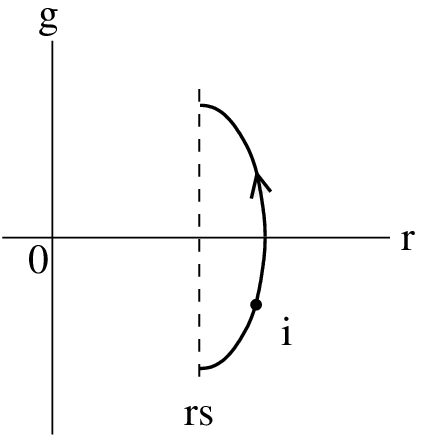}\hfill
\includegraphics[scale=.8]{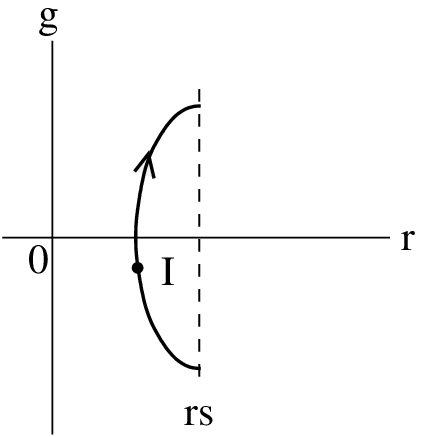}\hfill\phantom{a}
\caption{Left: $\rho_0>\rhos$,\quad Right: $0<\rho_0<\rhos$}
\label{figure-1}
\end{psfrags}
\end{figure}

It follows from \eqref{1-c3} that there exists a unique $\rho_M$ satisfying
\begin{equation*}
H(\rho_M)=H(\rho_0)+\frac{1}{2} G_0^2\quad \text{and}\quad (\rho_M-\rhos)(\rho_0-\rhos)>0.
\end{equation*}
It is obvious that $(\rho_M,0)$ and $(\rhos, G_M)$ with $G_M$ defined in \eqref{1-c4} lie on the curve of \eqref{1-c1}. Suppose that $\rho_0>\rhos$. Then
the initial value problem
\eqref{1-b6} and \eqref{1-b8} has a unique smooth solution $(\rho, G)$ as long as $G<G_M$. Fix a small constant $\eps>0$ such that $\eps<\frac 12(G_M-G_0)$. Then  the initial value problem \eqref{1-b6} and \eqref{1-b8} has a unique smooth solution on the interval $[0,L_{\eps}]$ where $G(L_{\eps})=G_M-\eps$ and $\rho(L_{\eps})>\rhos$. Since $\rho>\rho_c$, it follows from the equation \eqref{1-b4} that
\[
L_{\eps}<\frac{G_M-G_0}{\rho_c}.
\]
Note that $L_{\eps}$ continuously increases as $\eps>0$ decreases to $0$. Therefore, there exists a constant $\bar{L}\in(0, \frac{G_M-G_0}{\rho_c} ]$ depending on $\gam$, $m_0$, $S_0$, $\rho_0$, $G_0$ such that
\begin{equation*}
\bar{L}=\sup_{\eps>0}L_{\eps}.
\end{equation*}
Since $G$ tends to $G_M$ as $x_1$ converges to $\bar{L}-$, the identity \eqref{1-c1} implies that $\rho$ tends to $\rho_c$ at the same time. This proves (a) for the case $\rho_0>\rho_c$. The case $\rho_0<\rho_c$ can be proved
similarly to obtain (a), so we omit details.

To prove (b), we need to estimate a lower bound of $\bar{L}$. Fix $\gam>2$ and $\rho_0>\rho_c$. For a small $\eps>0$, it follows from
$G'=\rho$ that
\begin{equation}
\label{1-c6}
L_{\eps}\ge \frac{1}{\rho_M}\int_0^{L_{\eps}}\rho(x_1)\;dx_1=\frac{G_M-G_0-\eps}{\rho_M}
\end{equation}
Letting $\eps>0$ go to $0$ in \eqref{1-c6} yields
\begin{equation*}
\bar{L}(\rho_0)\ge \frac{G_M-G_0}{\rho_M}\ge \frac{G_M}{\rho_M}-\frac{G_0}{\rho_0}
\end{equation*}
where $\rho_M>\rho_0$ is used. It follows from \eqref{1-c1} and \eqref{1-c2} that $\frac{G_M}{\rho_M}$ can be represented as
\begin{equation*}
\frac{G_M}{\rho_M}=\sqrt{\frac{2H(\rho_M)}{\rho_M^2}}.
\end{equation*}
Fix $G_0$ and let $\rho_0$ tend to infinity. Then, $\rho_M$ tends to infinity as well.
The explicit form of $H(\rho)$ in \eqref{1-c2}, together with $\gam>2$, gives
\[
\lim_{\rho_M\to \infty}\frac{G_M}{\rho_M}=\infty.
\]
This proves (b).



\end{proof}

\begin{lemma}
\label{lemma-2}
Given $\rho_0\in \R_+\setminus\{0, \rho_c\}$ and $G_0\in \R$, let $\bar{L}$ be from (a) of Proposition \ref{proposition-1}. Then the solution $(\rho, G)$ of \eqref{1-b6} and \eqref{1-b8} satisfies
\begin{equation*}
{\rm sgn}(\rho-\rho_c)={\rm sgn}(\rho_0-\rho_c)\quad\tx{on}\quad [0, \bar{L}).
\end{equation*}
Therefore, for any $L\in(0, \bar{L})$, there exists a constant $\nu_0>0$ depending on $\gam$, $m_0$, $S_0$, $\rho_0$, $G_0$ and $L$ such that
\begin{equation}
    \label{1-b9}
    (\gam e^{S_0}\rho^{\gam-1}-\frac{m_0^2}{\rho^2}){\rm sgn}(\rho_0-\rho_c)\ge \nu_0 \;\;\tx{on}\;\;[0,L].
    \end{equation}

\end{lemma}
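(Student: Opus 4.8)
The plan is to prove first that the solution stays strictly on one side of $\rho_c$, i.e.\ $\sgn(\rho-\rho_c)\equiv\sgn(\rho_0-\rho_c)$ on $[0,\bar L)$, and then to extract \eqref{1-b9} from this by a compactness argument on $[0,L]$. The substantive part is the sign identity; once it is available the quantitative bound follows essentially automatically. Beyond Proposition \ref{proposition-1} I will only use the conservation identity \eqref{1-c1}, the monotonicity of $H$ from \eqref{1-c3} (which, together with $H(\rho_c)=0$ --- immediate from \eqref{1-c2} --- gives $H\ge 0$ on $(0,\infty)$ with $H(\varrho)=0$ only at $\varrho=\rho_c$), and the fact that $G$ is strictly increasing along the solution, which holds because $G'=\rho$ by \eqref{1-b4} while $\rho>0$ on $[0,\bar L)$ (indeed \eqref{1-c1} forces $H(\rho)\le H(\rho_0)+\frac12 G_0^2$, and $H(\varrho)\to\infty$ as $\varrho\to 0^+$, so $\rho$ stays bounded away from $0$).

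For the sign identity I would argue as follows. Set $E_0:=H(\rho_0)+\frac12 G_0^2$; by \eqref{1-c4} we have $G_M^2=2E_0$. Since $\rho_0\ne\rho_c$ we get $H(\rho_0)>0$, hence $|G_0|=\sqrt{G_M^2-2H(\rho_0)}<G_M$. Next, $|G|<G_M$ throughout $[0,\bar L)$: from \eqref{1-c1} and $H\ge 0$ one has $\frac12 G^2\le E_0=\frac12 G_M^2$, so $|G|\le G_M$; the value $G=G_M$ cannot be attained at any $x_1^0\in[0,\bar L)$, for strict monotonicity of $G$ would then make $G>G_M$ just to the right of $x_1^0$ (such points exist since $x_1^0<\bar L$), contradicting $|G|\le G_M$; and $G=-G_M$ cannot be attained either, since $G(x_1^0)\ge G(0)=G_0>-G_M$. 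With $|G|<G_M$ in hand, \eqref{1-c1} gives $H(\rho(x_1))=E_0-\frac12 G(x_1)^2>0$ on all of $[0,\bar L)$, hence $\rho(x_1)\ne\rho_c$ there. As $\rho-\rho_c$ is continuous and nowhere zero on the connected set $[0,\bar L)$, it keeps the sign it has at $x_1=0$, which is the asserted identity.

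For the bound, fix $L\in(0,\bar L)$, put $s:=\sgn(\rho_0-\rho_c)$, and write $g(\varrho):=\gam e^{S_0}\varrho^{\gam-1}-\frac{m_0^2}{\varrho^2}=H'(\varrho)$, so that $\sgn g(\varrho)=\sgn(\varrho-\rho_c)$ for $\varrho>0$ by \eqref{1-c3}. By the first part, $x_1\mapsto s(\rho(x_1)-\rho_c)$ is continuous and strictly positive on the compact set $[0,L]$, hence bounded below there by some $\delta>0$; since $\rho$ is continuous and positive, $\rho([0,L])$ is a compact subset of $(0,\infty)$ along which $s(\varrho-\rho_c)\ge\delta$. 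On this set the continuous function $\varrho\mapsto s\,g(\varrho)$ equals $|g(\varrho)|$ and is strictly positive (it vanishes only at $\rho_c$, which is excluded), so $\nu_0:=\min_{\varrho\in\rho([0,L])}s\,g(\varrho)>0$ yields \eqref{1-b9}, with dependence on $\gam,m_0,S_0,\rho_0,G_0,L$ inherited from $g$ and from the solution on $[0,L]$. The only slightly delicate point in all of this is the no-crossing property used above: intuitively, $\rho$ reaching $\rho_c$ would force $G=G_M$ there, which is impossible since $G$ is strictly increasing and attains $G_M$ only in the limit $x_1\to\bar L-$.
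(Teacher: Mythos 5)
Your proof is correct and follows essentially the same route as the paper, whose proof of Lemma \ref{lemma-2} is simply the observation that the claim follows from \eqref{1-b4}, the conserved quantity \eqref{1-c1}, and the properties \eqref{1-c3} of $H$ — exactly the three ingredients you use, with the details (strict monotonicity of $G$, $|G|<G_M$ on $[0,\bar L)$, hence $H(\rho)>0$ and $\rho\neq\rho_c$, then compactness on $[0,L]$) filled in. No gaps.
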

\begin{proof}
The lemma is a direct consequence of the equation \eqref{1-b4}, the identity \eqref{1-c1}, and the property \eqref{1-c3}.
\end{proof}



\subsection{
Main theorems}
Given $\gam>1$, $S_0>0$ and $m_0>0$,
let $(\rho, G)$ solve \eqref{1-b6} and \eqref{1-b8} with $\rho_0>\rhos$ on $[0, L]$ for some $L<\bar{L}$.
Define $\Om_L:=(0,L)\times (-1,1)$.
The boundary $\der\Om_L$ consists of
\begin{equation*}
\begin{split}
&\Gamen=\der\Om_L\cap\{x_1=0\},\quad \Gamex=\der\Om_L\cap\{x_1=L\},\quad
\Gamw=\der\Om_L\setminus(\Gamen\cup\Gamex).
\end{split}
\end{equation*}
For $\rx=(x_1, x_2)\in \Omega_L$,
set
\begin{equation}
\label{backgroundsol}
(\bar\rho(\rx), \bar{\bf u}(\rx), \bar p(\rx)):=(\rho(x_1), u(x_1), 0, p(x_1))
\end{equation}
where $u$ is given in \eqref{1-b5} and $p=e^{S_0}\rho^\gamma$. Define
\begin{equation}
\label{1-d3}
\Phi_0(\rx):=\int_0^{x_1} G(t)dt-\left(\frac{m_0^2}{2\rho_0^2}+\frac{\gam e^{S_0}\rho_0^{\gam-1}}{\gam-1} \right).
\end{equation}
Then $(\bar\rho, \bar{\bf u}, \bar p, \Phi_0)(\rx)$ satisfy \eqref{1-a5}--\eqref{1-a9} in $\Om_L$. Also, $\bar{\bf u}$ and $\Phi_0$ satisfy
\begin{equation*}
\bar{\bf u}\cdot{\bf n}_w=0\quad\text{and}\quad \nabla\Phi_0\cdot{\bf n}_w=0\quad\tx{on}\quad \Gamw
\end{equation*}
where ${\bf n}_w$ is the inward unit normal  of $\Gamw$. Furthermore,
by the definition \eqref{1-d3} of $\Phi_0$, we have
\begin{equation*}
\bar{\msB}+\Phi_0\equiv 0
\quad{\tx{in}}\quad \Om_L,
\end{equation*}
for
\begin{equation*}
\bar{\msB}=\frac 12|\bar{\bf u}|^2+\frac{\gam \bar p}{(\gam-1)\bar{\rho}},\quad
\msB_0=\bar{\msB}(0).
\end{equation*}
\begin{definition}
\label{def-background}
For fixed $\gam>1$, $(\bar{\rho}, {\bar{\bf u}}, \bar p, \Phi_0)$ given by \eqref{backgroundsol} is called the background solution corresponding to $(S_0, m_0, \rho_0, G_0)$.
\end{definition}
Our aim is to prove stability of the background solution $(\bar{\rho}, \bar{\bf u}, \bar p, \Phi_0)$ under two dimensional small perturbations of boundary data.
Before we state our main problem and main theorems, weighted H\"{o}lder norms are introduced first.
For a bounded connected open set $\mcl{O}\subset\R^2$, let $\Gam$ be a closed portion of $\der\mcl{O}$. For $\rx,\ry\in\mcl{O}$, set
\begin{equation*}
\delta_{\rx}:=\rm{dist}(\rx,\Gam)\quad \text{and}\quad  \delta_{\rx,\ry}:=\min(\delta_{\rx},\delta_{\ry}).
\end{equation*}
For $k\in\R$, $\alp\in(0,1)$ and $m\in \mathbb{Z}^+$, define the standard H\"{o}lder norms by
\begin{align*}
&\|u\|_{m,\mcl{O}}:=\sum_{0\le|\beta|\le m}\sup_{\rx\in \mcl{O}}|D^{\beta}u(\rx)|,\quad
[u]_{m,\alp,\mcl{O}}:=\sum_{|\beta|=m}\sup_{\rx, \ry\in\mcl{O},\rx\neq  \ry}\frac{|D^{\beta}u(\rx)-D^{\beta}u(\ry)|}{|\rx-\ry|^{\alp}},
\end{align*}
and the weighted H\"{o}lder norms by
\begin{align*}
&\|u\|_{m,0,\mcl{O}}^{(k,\Gam)}:=\sum_{0\le|\beta|\le m}\sup_{\rx\in \mcl{O}}\delta_{\rx}^{\max(|\beta|+k,0)}|D^{\beta}u(\rx)|,\\
&[u]_{m,\alp,\mcl{O}}^{(k,\Gam)}:=\sum_{|\beta|=m}\sup_{\rx,\ry\in\mcl{O}, \rx\neq \ry}\delta_{\rx,\ry}^{\max(m+\alp+k,0)}\frac{|D^{\beta}u(\rx)-D^{\beta}u(\ry)|}{|\rx-\ry|^{\alp}},\\
&\|u\|_{m,\alp,\mcl{O}}:=\|u\|_{m,\mcl{O}}+[u]_{m,\alp,\mcl{O}},\quad
\|u\|_{m,\alp,\mcl{O}}^{(k,\Gam)}:=\|u\|_{m,0,\mcl{O}}^{(k,\Gam)}+[u]_{m,\alp,\mcl{O}}^{(k,\Gam)},
\end{align*}
where $D^{\beta}$ denotes $\der_{x_1}^{\beta_1}\der_{x_2}^{\beta_2}$ for a multi-index $\beta=(\beta_1,\beta_2)$ with $\beta_j\in\mathbb{Z}_+$ and $|\beta|=\beta_1+\beta_2$. $C^{m,\alp}_{(k,\Gam)}(\mcl{O})$ denotes the completion of the set of all smooth functions whose $\|\cdot\|_{m,\alp,\mcl{O}}^{(k,\Gam)}$ norms are finite. For simplicity of notations, let $\|\cdot\|_{\alp,\mcl{O}}$ denote $\|\cdot\|_{0,\alp,\mcl{O}}$. For a vector function $\bv=(v_1, \cdots, v_n)$, define $\|\bv\|_{m, \alpha, \mcl{O}}^{k, \Gam}= \sum_{i=1}^n\|v_i\|_{m, \alpha, \mcl{O}}^{k, \Gam}$ and $\|\bv\|_{W^{k, p}(\mcl{O})}= \sum_{i=1}^n\|v_i\|_{W^{k, p}(\mcl{O})}$.
\begin{problemL}
Fix a constant $\alp\in(0,1)$.
For given functions $(\Phi_{bd}, p_{ex}, \msB_{en}, S_{en})$ satisfying
\begin{equation}
\label{1-d7}
\begin{split}
&\|G_{en}-G_0\|_{1,\alp,\Gamen}^{(-\alp,\der\Gamen)}+
\|\msB_{en}-\msB_0\|_{1,\alp,\Gamen}^{(-\alp, \partial\Gamen)}+\|S_{en}-S_0\|_{1,\alp,\Gamen}\\
&\phantom{ttttttttttttaaaaaaa}+\|\Phi_{bd}-\Phi_0\|_{2,\alp,\Gamex}^{(-1-\alp,\Gamw)}
+\|p_{ex}-\bar p(L)\|_{1,\alp,\Gamex}^{(-\alp,\partial\Gamex)}
\le \sigma
\end{split}
\end{equation}
for a sufficiently small constant $\sigma>0$, find a solution $(\rho,u,v, p, \Phi)$ of \eqref{1-a3}
in $\Om_L$ with boundary conditions
\begin{align}
\label{1-d8}
&v=0,\,\,
\Phi_{x_1}=G_{en},\,\,
S=S_{en},\,\,
\msB=\msB_{en}
\,\,&\tx{on}\quad \Gamen,\\
\label{1-d9}
& (u,v)\cdot{\bf n}_w=\nabla\Phi\cdot{\bf n}_w=0&\tx{on}\quad \Gamw,\\
\label{1-d10}
&p=p_{ex},\,\,
\Phi=\Phi_{bd}
&\tx{on}\quad \Gamex,
\end{align}
where ${\bf n}_w$ is the inward unit normal vector on $\Gamw$.
\end{problemL}

Before stating our main results, we introduce the following notations.
Fix constant $\alp\in(0,1)$, $\mu>1$.  For functions $(\Phi_{bd},G_{en}, \Sen, \msB_{en},\pex)\in C^2(\Om_L)\times [C^1(\Gamen)]^3
\times C^1(\Gamex)$, set
\begin{equation*}
\begin{split}
&\om_1(G_{en}, \Phi_{bd},S_{en},\msB_{en},p_{ex}):=\|G_{en}-G_0\|_{1,\alp,\Gamen}^{(-\alp, \der\Gamen)}
+\|S_{en}-S_0\|_{1,\alp,\Gamen}+\|\msB_{en}-\msB_0\|_{1,\alp,\Gamen}^{(-\alp,\partial \Gamen)}
\\
&\phantom{aaaaaaaaaaaaaaaaaaaaaaa}+\|\Phi_{bd}-\Phi_0\|_{2,\alp,\Gamex}^{(-1-\alp,\der\Gamex)}
+\|p_{ex}-\bar p(L)\|_{1,\alp,\Gamex}^{(-\alp,\partial\Gamex)},\\
&\om_2(\Phi_{bd},
\Sen, \mathscr{B}_{en}):=\|\Sen-S_0\|_{W^{2,\mu}(\Gamen)}
+\|\mathscr{B}_{en}+\Phi_{bd}\|_{W^{2,\mu}(\Gamex)}.
\end{split}
\end{equation*}

The main results of this paper is as follows:
\begin{theorem}
\label{theorem-3}
Fix $\gam>1$, and let $(\bar{\rho}, {\bar{\bf u}}, \bar{p}, \Phi_0)$ be the background solution corresponding to $(S_0, m_0, \rho_0, G_0)$ in $\Om_L$ with $\rho_0>\rhos$. Suppose that the background solution satisfies
\begin{equation}
\label{estd0}
\frac{\bar{\rho}L^2}{2(\gam e^{S_0}{\bar{\rho}}^{\gam-1}-\frac{m_0^2}{{\bar{\rho}}^2})}\le 1-\delta_0\quad\tx{in}\quad \Om_L
\end{equation}
for some constant $\delta_0\in(0,1)$.

(a)[Existence] Suppose that
$\Phi_{bd}$ satisfies the compatibility condition
\begin{equation}
\label{1-f10}
\der_{x_2}\Phi_{bd}=0\quad\tx{on}\quad \ol{\Gamw}\cap \ol{\Gamex}.
\end{equation}
Then there exists a small constant $\sigma_1$ depending on $\gam$, $m_0$, $S_0$, $\rho_0$, $G_0$, $L$, $\delta_0$ and $\alp$ such that
if
\begin{equation}
\label{pert1}
\om_1(G_{en},\Phi_{bd}, S_{en}, \msB_{en}, \pex)\le \sigma_1
\end{equation}
for some $\alp\in(0,1)$, then
the nonlinear boundary value problem \eqref{1-a3} with boundary conditions \eqref{1-d8}--\eqref{1-d10} has a solution $(\rho, u, v, p,\Phi)\in [C(\ol{\Om_L})\cap C^1(\Om_L)]^4\times (C^1(\ol{\Om_L})\cap C^2(\Om_L))$ satisfying
the estimate
\begin{equation}
\label{2-d3}
\|(\rho, u, v, p)-(\bar{\rho}, \bar u, 0, \bar p)\|_{1,\alp,\Om_L}^{(-\alp,\corners)}
+\|\Phi-\Phi_0\|_{2,\alp,\Om_L}^{(-1-\alp,\corners)}\le C\om_1(G_{en}, \Phi_{bd},\Sen,\msB_{en},\pex),
\end{equation}
where the constant $C$ is determined by $\gam$, $m_0$, $\rho_0$, $G_0$, $S_0$, $L$, $\delta_0$ and $\alp$.

(b) [Uniqueness] There exists $\sigma_2>0$ depending on $\gam$, $m_0$, $\rho_0$, $G_0$, $S_0$, $L$, $\delta_0$, $\alp$ and $\mu$ such that if
\begin{equation*}
\om_1(G_{en}, \Phi_{bd},S_{en}, \msB_{en},p_{ex})+
\om_2(\Phi_{bd},
\Sen, \mathscr{B}_{en})\le \sigma_2,
\end{equation*}
with $\alp\in(\frac 12, 1)$ and $\mu\in(2,\infty)$,
then the solution $(\rho, {\bf u}, p, \Phi)$ obtained in (a) is unique.
\end{theorem}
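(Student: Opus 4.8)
The plan is to reformulate the Euler--Poisson system via the stream function and then run a fixed-point (iteration) scheme that decouples the transport part from a second-order nonlinear elliptic system, exactly as advertised in the abstract. First I would introduce the stream function $\psi$ determined by $\psi_{x_1}=-\rho v$, $\psi_{x_2}=\rho u$, so that \eqref{1-a5} holds automatically and the no-flow condition \eqref{1-d9} becomes $\psi=\text{const}$ on each component of $\Gamw$; normalizing, $\psi$ runs over $[0,m_0]$ between the two walls. The transport equations \eqref{1-a7}--\eqref{1-a8} then say that $S$ and $\msK=\msB+\Phi$ are constant along streamlines, hence functions of $\psi$ alone, determined by the entrance data: $S=S_{en}(\psi)$, $\msK=\mathcal{K}_{en}(\psi):=\msB_{en}+\Phi_0|_{\Gamen}$ pulled back through the entrance parametrization. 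This reduces the unknowns to $(\psi,\Phi)$, coupled through $\Delta\Phi=\rho$ and through a second-order equation for $\psi$ obtained from \eqref{1-a6} after using Bernoulli's law to express $\rho$ (and hence $p$, $u$, $v$) in terms of $|\nabla\psi|^2/\rho^2$, $S(\psi)$, $\msK(\psi)$ and $\Phi$ — this is where subsonicity enters, guaranteeing that $\rho$ is a well-defined smooth function of $(|\nabla\psi|,\psi,\Phi)$ by the implicit function theorem on the subsonic branch (the density-flux relation is monotone when $M<1$). The resulting elliptic equation for $\psi$ is uniformly elliptic precisely under a smallness condition which, linearized at the background, is \eqref{estd0}.

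The core of the argument is the iteration. I would fix a convex closed set $\mathcal{S}$ in the weighted Hölder space $C^{1,\alp}_{(-\alp,\corners)}(\Om_L)\times C^{2,\alp}_{(-1-\alp,\corners)}(\Om_L)$ consisting of $(\psi,\Phi)$ within $C\sigma$ of the background, with $C$ to be chosen. Given $(\hat\psi,\hat\Phi)\in\mathcal{S}$: (i) solve the Poisson equation $\Delta\Phi=\rho(\hat\psi,\hat\Phi)$ with mixed boundary data — Dirichlet $\Phi=\Phi_{bd}$ on $\Gamex$, Neumann $\nabla\Phi\cdot\bf n=0$ on $\Gamw$, and Neumann-type $\Phi_{x_1}=G_{en}$ on $\Gamen$ — obtaining the new $\Phi$; (ii) with this $\Phi$ and with $S(\psi),\msK(\psi)$ frozen as functions of the \emph{old} $\hat\psi$, solve the quasilinear (now with coefficients frozen at $\hat\psi$, hence linear) elliptic equation for the new $\psi$, with boundary conditions $\psi_{x_2}=\rho u$-type Dirichlet data at the entrance (encoding $v=0$ and the given mass flux profile), $\psi=$ const on $\Gamw$, and at the exit the condition coming from prescribing $p=p_{ex}$, which via Bernoulli translates into a condition relating $|\nabla\psi|$, $\psi$ and $\Phi$ on $\Gamex$ — an oblique-derivative condition for $\psi$. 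Standard Schauder estimates in weighted Hölder spaces on the Lipschitz (rectangular) domain, with the corner weights absorbing the singularity at $\Gamw$ where boundary conditions change type, give that the map sends $\mathcal{S}$ into itself once $\sigma_1$ is small, and the compact embedding $C^{1,\alp'}\hookrightarrow C^{1,\alp}$ (resp. for $\Phi$) yields a fixed point by Schauder's theorem. Recovering $(\rho,u,v,p)$ from $(\psi,\Phi)$ via Bernoulli and checking that \eqref{1-a6} holds — using that $\psi$ solves the reduced equation and the other equations hold along streamlines — completes existence, with \eqref{2-d3} read off from the Schauder bounds. The compatibility condition \eqref{1-f10} is exactly what is needed for the exit data of $\Phi$ to be compatible with the Neumann wall condition at the corner $\ol{\Gamw}\cap\ol{\Gamex}$, so that $\Phi\in C^{2,\alp}_{(-1-\alp,\corners)}$.

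For uniqueness (part (b)), suppose $(\rho_i,\bu_i,p_i,\Phi_i)$, $i=1,2$, are two solutions from (a), pass to their stream functions $\psi_i$, and estimate the differences $\delta\psi=\psi_1-\psi_2$, $\delta\Phi=\Phi_1-\Phi_2$ in a lower-order norm (e.g. $H^1$ or $W^{1,\mu}$). The transport equations give that $\delta S$ and $\delta\msK$ are controlled by the difference of the streamline maps, i.e. by $\delta\psi$ itself, with a loss: this is why $\om_2$ with $W^{2,\mu}$, $\mu>2$, entrance/exit data is needed — it buys a Sobolev embedding into $C^1$ so that composing with the (Lipschitz) flow map is stable and the transported quantities depend Lipschitz-continuously on $\psi$ in the right norm. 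Then $\delta\psi$ satisfies a linear elliptic equation whose right-hand side and coefficient perturbations are $O(\sigma)\cdot(\|\delta\psi\|+\|\delta\Phi\|)$, and $\delta\Phi$ satisfies $\Delta\delta\Phi=\delta\rho$ with homogeneous boundary data, so $\|\delta\Phi\|_{H^2}\lesssim\|\delta\rho\|_{L^2}\lesssim\|\delta\psi\|+\|\delta\Phi\|$ — but with a small constant, since $\delta\rho$ involves $\delta\Phi$ only through the subsonic density relation whose $\Phi$-derivative is bounded, and the combined estimate closes as a contraction once $\sigma_2$ is small, forcing $\delta\psi=\delta\Phi=0$. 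The restriction $\alp\in(\tfrac12,1)$ ensures $C^{1,\alp}\hookrightarrow$ a space in which the flow-map composition estimates and the trace estimates on $\Gamw$ (a Lipschitz boundary) are valid without extra logarithmic losses.

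\textbf{Main obstacle.} The hard part will be the elliptic theory on the rectangle $\Om_L$ at the two corner sets $\ol{\Gamw}\cap\ol{\Gamen}$ and $\ol{\Gamw}\cap\ol{\Gamex}$, where the boundary condition jumps from Dirichlet/oblique to Neumann: getting \emph{uniform} (in the iteration) weighted Schauder estimates $C^{1,\alp}_{(-\alp,\corners)}$ for $\psi$ and $C^{2,\alp}_{(-1-\alp,\corners)}$ for $\Phi$ there, with the weight exponents chosen so that the solution genuinely lies in those spaces (which constrains the opening angle — here $\pi/2$ — and forces the compatibility condition \eqref{1-f10}), is the technical heart. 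A secondary difficulty is the nonlocal coupling through $\Delta\Phi=\rho$: one must verify that the Newtonian-potential contribution of $\delta\rho$ to $\delta\Phi$, and its feedback into the $\psi$-equation and into Bernoulli's law, carries a genuinely small constant (not merely bounded), which is what makes the fixed-point map a contraction rather than just continuous; this is presumably the "energy estimate for the associated elliptic system" the abstract highlights.
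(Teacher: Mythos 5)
Your overall reduction (stream function, $S$ and $\msK$ transported along streamlines, a second-order problem for $(\psi,\Phi)$, Schauder fixed point, lower-norm contraction for uniqueness with $W^{2,\mu}$ data and $\alp>\tfrac12$) matches the paper's strategy, and your corner discussion is fine (the paper handles the corners by even reflection using \eqref{1-f10} and weighted scaling, which is comparatively routine). The genuine gap is in the structure of your iteration: you decouple the Poisson equation from the stream-function equation, solving $\Delta\Phi=\rho(\hat\psi,\hat\Phi)$ first and then the $\psi$-equation with the new $\Phi$. But the coupling coefficients are \emph{order one}, not $O(\sigma)$: linearizing $\rho=Q(\nabla\psi,\Phi,S,\msK)$ at the background gives $\partial_\Phi\rho=\mfrakd$ and $\partial_{\nabla\psi}\rho=(\mfrakc_1,\mfrakc_2)$ as in \eqref{3-a7}, and the $\psi$-equation carries the order-one term $\partial_z A_i\,(\Phi-\Phi_0)$ inside the divergence. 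Hence in your scheme the new iterate's deviation is bounded by (elliptic-estimate constant)$\times$(these order-one coefficients)$\times$(old deviation) plus $O(\sigma)$, and the self-map/contraction property requires the loop gain, a product of weighted Schauder constants with $\sup|\mfrakd|$, $\sup|\mfrakb_i|$, $\sup|\mfrakc_i|$, to be $<1$. Smallness of $\sigma_1$ does not buy this, and standard Schauder or Newtonian-potential estimates do not give constants sharp enough to beat $\sup|\mfrakd|\le\frac{2(1-\delta_0)}{L^2}$; your closing remark correctly identifies that a ``genuinely small constant'' must be verified, but no mechanism is supplied, and in H\"older norms it is in general false.

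The paper's resolution — and the missing idea — is to keep the linearized problem \emph{coupled}: at each step of the fixed-point scheme one solves the linear system \eqref{3-a1}--\eqref{3-a1-b2} for $(\phi,\Psi)$ simultaneously, and its well-posedness rests on an energy estimate that exploits two exact structural facts. First, $\der_zA_i+\der_{q_i}B=0$ (Lemma \ref{lemma-1}(c)), so when the weak form \eqref{3-b7} is tested against the solution itself the off-diagonal terms cancel, leaving only $\sum_i\mfraka_{ii}(\der_i\mfu)^2+|\nabla\mfV|^2+\mfrakd\,\mfV^2$. Second, the negative zeroth-order coefficient $\mfrakd$ is absorbed using the Poincar\'e inequality with the sharp constant $\tfrac{L^2}{2}$ in \eqref{p-ineq}, and it is exactly hypothesis \eqref{estd0} that guarantees $1-\tfrac{L^2}{2}\|\mfrakd\|_{C^0}\ge\delta_0>0$, hence coercivity and Lax--Milgram solvability (Lemma \ref{lemma-3}); H\"older and weighted $C^{2,\alp}$ bounds are then a posteriori estimates on this solution, so no smallness of the loop gain is ever needed at the linear level, and smallness of $\sigma$ is used only for the quadratic remainders ${\bm F}, f, g, h$ and for the Schauder fixed point (which, note, is a continuity-plus-compactness argument, not a contraction — contraction-type smallness enters only in the uniqueness proof). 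Without this coupled energy estimate your Gauss--Seidel-type iteration does not close, and indeed Remark \ref{remark-condition2} indicates that when \eqref{estd0} fails the coupled linear problem itself may be ill-posed, so no rearrangement of a decoupled scheme can avoid using this structure.
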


\begin{remark} \label{remark-condition}
There are a large class of background solutions for which the condition
 \eqref{estd0} holds. For example, we can obtain background solutions satisfying \eqref{estd0} in the following two cases.
\begin{itemize}
\item[(i)] For any given $m_0>0, \gam>1, G_0$ and $\rho_0(>\rhos)$,  if $L$ is suitably small, then the condition \eqref{estd0} holds.
\item[(ii)] Suppose that $\gam>2$. Let $(\bar{\rho}, \bar{G})$ be a background solution with $\rho_0>\rhos$. If $\bar{G}\le 0$ in $\Om_L$, then
\begin{equation*}
\max_{\ol{\Om_L}} \frac{\bar{\rho}}{\gam e^{S_0}{\bar{\rho}}^{\gam-1}-\frac{m_0^2}{{\bar{\rho}}^2}}\le
\frac{\rho_0}{\gam e^{S_0}\rho_0^{\gam-1}-\frac{m_0^2}{\rho_0^2}}\rightarrow 0\quad\tx{as}\quad \rho_0\to \infty.
\end{equation*}
Therefore, one can choose $\rho_0>\rhos$ large depending on $\gam$, $m_0$, $S_0$, $G_0$, $L$ so that \eqref{estd0} holds.
\end{itemize}
\end{remark}

\begin{remark}
\label{remark-condition2}
Theorem \ref{theorem-3} is proved by  a fixed point method, and \eqref{estd0} is needed to guarantee the well-posedness of a linear elliptic system boundary value problem \eqref{3-a1}--\eqref{3-a1-b2} related to \eqref{1-a5}--\eqref{1-a9}.  If the condition \eqref{estd0} is violated, in general, the associated boundary value problem with linear elliptic system may not be wellposed.
\end{remark}

The main idea to prove Theorem \ref{theorem-3} is to introduce a stream function $\psi$  which
reduces \eqref{1-a3} to a nonlinear elliptic system for $(\psi, \Psi)$ and two
transport equations for $(S, \msK)$.
The second order elliptic system for $(\psi,\Psi)$ is solved with the aid of its elaborate structure.
The technique of this work was inspired by the study on two dimensional subsonic Euler equations \cite{XX3}
and subsonic Euler-Poisson system modeling the flows in semiconductor devices \cite{BDX}.


In the rest of the paper, we say that a constant $C$ depends on the background data if $C$ is chosen depending on
$\gamma$, $m_0$, $S_0$, $\rho_0$, $G_0$, and $L$.
Hereafter, any constant $C>0$ appearing in various estimates is presumed to depend on the
background data
unless otherwise specified.

The rest of the paper is organized as follows. Using a stream function $\psi$, \eqref{1-a3} is
reduced to a nonlinear second order elliptic system for $(\psi,\Phi)$ and transport equations for $S$ and $\msK$ in Section \ref{section-2}. Various a priori estimates and the existence of solutions to the linear boundary value problems are given in Section \ref{section-3}. We give the proof for Theorem \ref{theorem-1}, an equivalence of Theorem \ref{theorem-3}, in Section \ref{section-4}.

\section{Stream function formulation for the problem}
\label{section-2}
\subsection{Stream function formulation and Proof of Theorem \ref{theorem-3}}
We will prove Theorem \ref{theorem-3} for $\sigma_1$ sufficiently small so that any solution $(\rho, u, v, p, \Phi)$ satisfying \eqref{2-d3} satisfies $\rho>0, u>0$. In that case, \eqref{1-a3} is equivalent to \eqref{1-a5}--\eqref{1-a9}.

Suppose that $(\rho, u, v, p, \Phi)\in [C^1(\Omega)]^4\times C^2(\Omega)$ is a solution to \eqref{1-a5}--\eqref{1-a9}. By \eqref{1-a5}, there is a $C^2$ function $\psi$ satisfying
\begin{equation}
\label{1-e1}
\rho(u,v)=\nabla^{\perp}\psi\quad\tx{where}\quad \nabla^{\perp}:=({\der_{x_2}}, -{\der_{x_1}}).
\end{equation}
Note that there is a freedom of choice for the value of $\psi(0,-1)$.
Without loss of generality, we assume $\psi(0, -1)=0$.
It follows from \eqref{def-sk} and \eqref{1-e1} that
\begin{equation}
\label{1-e0}
\mG(\rho,\nabla\psi,\Phi,S,\msK)=0\quad\tx{in}\quad\Om_L,
\end{equation}
where for ${\bm q}=(q_1, q_2)$, $\mG$ is defined by
\begin{equation}
\label{1-e8}
\mG(\varsigma,{\bm q}, z, s, \eta)=\frac{|{\bm q}|^2}{\varsigma^2}+\frac{\gam \varsigma^{\gam-1} \exp(s)}{\gam-1}+z-\eta.
\end{equation}
If $\rho>0$ and $u>0$, \eqref{1-a6} is equivalent to
\begin{equation*}
v_{x_1}-u_{x_2}=\left(\frac{e^S\rho^{\gam-1}S_{x_2}}{\gam-1} -\msK_{x_2}\right)\frac{1}{u}.
\end{equation*}
This, together with \eqref{1-e1}, gives
\begin{equation}
\label{1-e5}
\Div\left(\frac{\nabla\psi}{\rho}\right)=
\left(\msK_{x_2}-\frac{Q^{\gam-1}\exp(S) S_{x_2}}{\gam-1}\right)\frac{\rho}{\psi_{x_2}}.
\end{equation}
It follows from \eqref{1-e1} that if $\rho>0$, then the equations \eqref{1-a7} and \eqref{1-a8} can be written as
\begin{align}
\label{1-f4t}
\nabla^{\perp}\psi\cdot\nabla S=0\quad \text{and}\quad
\nabla^{\perp}\psi\cdot\nabla \msK=0,
\end{align}
respectively.
Finally, the boundary conditions \eqref{1-d8}--\eqref{1-d10} can be formulated as the boundary conditions for $(\rho, \psi, \Phi, S, \msK)$ as follows:
\begin{equation}
\label{1-f5}
\left\{
\begin{aligned}
&\psi_x=0,\,\,
\Phi_{x_1}=G_{en},\,\,
S=S_{en},\,\,
\msK=\msB_{en}+\Phi_{bd} \,\,\tx{on}\;\;\Gamen,\\
&\psi_{x_1}=0,\,\,\Phi_{x_2}=0 \,\,\tx{on}\;\;\Gamw,\\
&e^S\rho^{\gam}=p_{ex},\,\, \Phi=\Phi_{bd} \,\,\tx{on}\;\;\Gamex.
\end{aligned}
\right.
\end{equation}
Under the condition of $\rho>0, u>0$, $(\rho, u, v, p, \Phi)$ solve \eqref{1-a3}  in $\Om_L$ with \eqref{1-d8}--\eqref{1-d10} if and only if $(\rho, \psi, \Phi, S, \msK)$ solve
\eqref{1-a9},  \eqref{1-e5}, \eqref{1-f4t} in $\Om_L$
with \eqref{1-f5}.

Define
\begin{equation}
\label{1-e6}
\psi_0(x_1,x_2)=m_0(x_2+1)\quad\tx{in}\quad\Om_L.
\end{equation}
Then, $(\psi, \Phi, \rho, S,\msK)=(\psi_0, \Phi_0, \bar{\rho}, S_0, 0)$ satisfy the system \eqref{1-a9},  \eqref{1-e5}, and \eqref{1-f4t} with boundary conditions \eqref{1-f5} where $(G_{en}, \Phi_{bd}, \Sen, \msB_{en}, \pex)=(G_0, \Phi_0, S_0, \msB_0, \bar{p}(L))$.

\begin{theorem}
\label{theorem-1}
Fix $\gam>1$, and let $(\bar{\rho}, {\bar{\bf u}}, \bar{p}, \Phi_0)$ be the background solution corresponding to $(S_0, m_0, \rho_0, G_0)$ in $\Om_L$ with $\rho_0>\rhos$. Suppose that the background solution satisfies \eqref{estd0}.
\begin{enumerate}
\item[(a)] [Existence]
Suppose that $\Phi_{bd}$ satisfies \eqref{1-f10}.
Then there exist $\sigma_3>0$ small and $C>0$ depending on the background data, $\delta_0$ and $\alp$ such that if $\om_1(G_{en}, \Phi_{bd},S_{en},\msB_{en},p_{ex})\le \sigma_3$ holds, then
the nonlinear boundary value problem  \eqref{1-a9},  \eqref{1-e5}, \eqref{1-f4t}, and \eqref{1-f5} has a solution $(\psi,\Phi, \rho, S,\msK)\in [C^2(\Om_L)]^2\times [C^1(\Om_L)]^3$, which satisfies
\begin{equation}
\label{1-f12}
\begin{split}
&\|(\psi-\psi_0, \Phi-\Phi_0)\|_{2,\alp,\Om_L}^{(-1-\alp,\corners)}
+\|(\rho-\bar{\rho},S-S_0, \msK)\|_{1,\alp,\Om_L}^{(-\alp,\Gamw)}\\
\le & C\om_1(G_{en}, \Phi_{bd},S_{en}, \msB_{en},p_{ex}).
\end{split}
\end{equation}
\item[(b)][Uniqueness]
There exists a constant $\sigma_4>0$ depending on the background data, $\delta_0$, $\alp$ and $\mu$ such that if
\begin{equation}
\label{1-f13}
\om_1(G_{en}, \Phi_{bd},S_{en}, \msB_{en},p_{ex})+
\om_2(\Phi_{bd},
\Sen, \mathscr{B}_{en})\le \sigma_4,
\end{equation}
with $\alp\in (\frac 12, 1)$ and $\mu\in(2,\infty)$,
then the solution $(\rho, \psi,\Phi, S,\msK)$
obtained in (a) is unique.
\end{enumerate}
\end{theorem}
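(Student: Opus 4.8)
The plan is to recast the boundary value problem \eqref{1-a9}, \eqref{1-e5}, \eqref{1-f4t}, \eqref{1-f5} as a fixed point problem for the pair $(\psi,\Phi)$, using the transport equations to recover $S$ and $\msK$. First I would solve the two transport equations in \eqref{1-f4t}: since $\nabla^\perp\psi$ is divergence free and, by the boundary conditions in \eqref{1-f5}, tangent to $\Gamw$ while transverse on $\Gamen$, the level curves of $\psi$ foliate $\Om_L$ and run from entrance to exit. Hence $S$ and $\msK$ are constant along these curves and are determined by their entrance values $S_{en}$ and $\msB_{en}+\Phi_{bd}|_{\Gamen}$ composed with the ``Lagrangian'' map $\rx\mapsto\psi(\rx)$; concretely $S(\rx)=S_{en}(\kappa(\psi(\rx)))$ for the inverse of $x_2\mapsto\psi(0,x_2)$, and similarly for $\msK$. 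For $\psi$ close to $\psi_0=m_0(x_2+1)$ in $C^{1,\alp}$ this composition is well defined, and one gets $\|S-S_0\|+\|\msK\|\le C\om_1$ with dependence on $\|\nabla\psi-\nabla\psi_0\|$. Next, given $(\psi,\Phi,S,\msK)$, equation \eqref{1-e0} (i.e. $\mG=0$ with $\mG$ as in \eqref{1-e8}) defines $\rho$ implicitly: because $\rho_0>\rhos$ puts us on the subsonic branch and $\der_\varsigma\mG\neq0$ there (this is exactly the sign condition behind \eqref{1-b9} and \eqref{estd0}), the implicit function theorem yields $\rho=\rho(\nabla\psi,\Phi,S,\msK)$ smoothly, with the $C^{1,\alp}$ bound propagating.

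With $\rho$, $S$, $\msK$ expressed through $(\psi,\Phi)$, the remaining equations \eqref{1-e5} and \eqref{1-a9} form a genuine second order elliptic system for $(\psi,\Phi)$: \eqref{1-e5} is $\Div(\nabla\psi/\rho)=(\text{lower order})$, which is uniformly elliptic in $\psi$ precisely under the subsonic condition, and \eqref{1-a9} is $\Delta\Phi=\rho$. I would linearize around $(\psi_0,\Phi_0)$ and set up an iteration map $\mcF$ sending $(\til\psi,\til\Phi)$ (a small perturbation) to the solution $(\psi,\Phi)$ of the \emph{linear} elliptic boundary value problem \eqref{3-a1}--\eqref{3-a1-b2} obtained by freezing the coefficients and nonlocal terms at the previous iterate, with boundary data from \eqref{1-f5} (Neumann-type at $\Gamen$ for $\Phi$, the exit pressure condition $e^S\rho^\gamma=\pex$ rewritten as a condition on $\psi$ at $\Gamex$, Dirichlet $\Phi=\Phi_{bd}$ at $\Gamex$, and the slip/Neumann conditions on $\Gamw$). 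By the linear theory promised in Section \ref{section-3} — whose well-posedness is exactly where hypothesis \eqref{estd0} is used — this linear problem has a unique solution in $C^{2,\alp}_{(-1-\alp,\Gamw)}(\Om_L)$ with norm bounded by $C\om_1$ plus a small multiple of the input norm. Choosing $\sigma_3$ small makes $\mcF$ map a ball of radius $C\om_1$ into itself; a contraction estimate in a weaker norm (e.g. $C^{1,\alp/2}$, or $C^0$) then gives a fixed point, which together with the recovered $(\rho,S,\msK)$ solves the nonlinear system and satisfies \eqref{1-f12}. The corners $\ol\Gamw\cap\ol\Gamex$ force the compatibility condition \eqref{1-f10} so that the mixed Dirichlet/Neumann data are consistent and the weighted $C^{2,\alp}$ estimate holds up to those corners.

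For part (b), uniqueness, I would take two solutions with the same data, both in the class \eqref{1-f12}, and estimate the difference. The subtlety is that the transport step only gives $S$ and $\msK$ with one derivative, so differences of $\rho$ are controlled in $C^{1,\alp}$ but differences of the transport data along differing characteristics lose regularity; this is why one needs the stronger hypothesis \eqref{1-f13} with $\om_2$ (a $W^{2,\mu}$ bound on the entrance/exit data) and $\alp>\tfrac12$, $\mu>2$, so that $W^{2,\mu}\hookrightarrow C^{1,\alp}$ and the composition with the perturbed flow map is Lipschitz in the relevant norm. Then the difference of the elliptic systems, with the difference of coefficients controlled by the difference of $(\psi,\Phi)$ and the difference of transport quantities controlled via $\om_2$, yields an estimate $\|\Delta(\psi,\Phi)\|\le C(\sigma)\|\Delta(\psi,\Phi)\|$ with $C(\sigma)\to0$ as $\sigma\to0$; choosing $\sigma_4$ small forces the difference to vanish. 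The main obstacle, and the crux of the whole argument, is the well-posedness and a priori $C^{2,\alp}$ estimate for the linear elliptic system \eqref{3-a1}--\eqref{3-a1-b2} under \eqref{estd0} — the coupling of the two second order equations through $\rho$ (which depends on both $\nabla\psi$ and $\Phi$) and through the nonlocal Poisson solve is not standard, and as Remark \ref{remark-condition2} signals, it genuinely can fail without \eqref{estd0}; this is presumably handled by the ``energy estimate'' advertised in the abstract, which I would expect to come from testing the system against a carefully chosen combination of $\psi$ and $\Phi$ (or their derivatives) so that the bad cross terms cancel, with \eqref{estd0} ensuring the resulting quadratic form is coercive.
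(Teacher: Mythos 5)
Your overall strategy coincides with the paper's: recover $S$ and $\msK$ by integrating \eqref{1-f4t} along level curves of $\psi$ (composition of the entrance data with the inverse of $x_2\mapsto\psi(0,x_2)$, exactly Lemma \ref{lemma-6}), recover $\rho$ from $\mG=0$ by the implicit function theorem on the subsonic branch, and reduce to the linear elliptic system \eqref{3-a1}--\eqref{3-a1-b2}, whose coercivity is indeed obtained by an energy (Lax--Milgram) argument in which the cross terms cancel exactly (Lemma \ref{lemma-1}(c)) and the zeroth order term $\mfrakd\,\omega^2$ is absorbed through a Poincar\'e inequality using \eqref{estd0}; the compatibility condition \eqref{1-f10} is used, as you guessed, for the weighted $C^{2,\alp}$ estimate up to the corners $\ol{\Gamw}\cap\ol{\Gamex}$. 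Your uniqueness argument for (b) is also the paper's: difference estimates in $C^{1,\beta}$ with the source differences measured in $L^{\mu_1}$, made possible by the $W^{2,\mu}$ control in $\om_2$ and $\alp>\tfrac12$, $\mu>2$.

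There is, however, a genuine gap in your closing step for part (a): you propose to obtain the fixed point by ``a contraction estimate in a weaker norm (e.g.\ $C^{1,\alp/2}$, or $C^0$).'' Under the hypotheses of (a) this estimate is not available, for precisely the reason you yourself identify in (b). The source term $f$ in \eqref{2-b5} contains $S_{x_2}$, and for two iterates $\til\phi_1,\til\phi_2$ one has $S_j=\Sen\circ\mathscr{L}^{(\psi_0+\til\phi_j)}$, so the difference of the sources contains $\bigl(\Sen'(\mathscr{L}_1)-\Sen'(\mathscr{L}_2)\bigr)\der_{x_2}\mathscr{L}_1$. With only $\Sen\in C^{1,\alp}(\ol{\Gamen})$ (which is all that $\om_1$-smallness provides), this term is controlled merely by $[\Sen']_{\alp}\,\|\mathscr{L}_1-\mathscr{L}_2\|_{0,\Om_L}^{\alp}$, i.e.\ H\"older rather than Lipschitz in the difference of the iterates, and such a sub-Lipschitz bound does not yield a contraction nor a Cauchy Picard sequence. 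A genuine Lipschitz estimate requires $\Sen''\in L^{\mu}$, which is exactly the extra hypothesis $\om_2\le\sigma_4$ imposed only in part (b). The paper avoids this by using the Schauder fixed point theorem instead: the iteration map $\mcl{J}$ is shown to map the closed convex set $\mcl{K}_\delta$ (compact in $[C^{1,\alp/2}(\ol{\Om_L})]^2$) into itself and to be merely \emph{continuous} there, which only needs qualitative convergence of $S_k$, $\der_{x_2}S_k$ (uniform continuity of $\Sen'$), not a quantitative modulus. You should therefore replace the contraction step in (a) by a Schauder argument (self-mapping of $\mcl{K}_\delta$ plus continuity in $C^{1,\alp/2}$), and reserve the quantitative difference estimate — with the $\om_2$ hypothesis — for the uniqueness proof, as in your part (b).
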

Once Theorem \ref{theorem-1} is proved, then Theorem \ref{theorem-3} easily follows from Theorem \ref{theorem-1}.

\begin{pf}
Suppose that Theorem \ref{theorem-1} is true, then fix $(\sigma_1,\sigma_2)=(\sigma_3,\sigma_4)$ for $(\sigma_3, \sigma_4)$ from Theorem \ref{theorem-1}. Given $(G_{en}, \Phi_{bd}, \Sen, \msB_{en}, \pex)$ satisfying \eqref{1-f10} and \eqref{pert1}, let $(\psi, \Phi, \rho, S, \msK)$ be a solution to \eqref{1-a9},  \eqref{1-e5}, \eqref{1-f4t}, and \eqref{1-f5}.  Since $\sigma_3$ is suitably small, the estimate \eqref{1-f12} implies that $\rho>0$.  Set
$
{\bf u}=\frac{\nabla^{\perp}\psi}{\rho}$ and $p=e^S\rho^{\gam}$. Then, $(\rho, {\bf u}, p, \Phi)$ solve \eqref{1-a3} with \eqref{1-d8}--\eqref{1-d10}, and satisfy \eqref{2-d3}.

 Given $(G_{en}, \Phi_{bd}, \Sen, \msB_{en}, \pex)$ satisfying \eqref{1-f10} and \eqref{1-f13}, let $(\rho_1, u_1, v_1, p_1, \Phi_1)$ and
 $(\rho_2, u_2, v_2, p_2, \Phi_2)$ be two solutions of \eqref{1-a3}, \eqref{1-d8}--\eqref{1-d10} satisfying \eqref{2-d3}. For each $k=1,2$, define
 \begin{equation*}
 \begin{split}
 &\psi_k(x_1,x_2)=\int_{-1}^{x_2}\rho_ku_k(x_1, s)\;ds,\\
 &S_k=\ln\frac{p_k}{\rho_k^{\gam}},\quad \msK_k=\frac 12(u_k^2+v_k^2)+\frac{\gam p_k}{(\gam-1)\rho_k}+\Phi_k.
 \end{split}
 \end{equation*}
Then, each $(\psi_k, \Phi_k, \rho_k, S_k, \msK_k)$ is a solution to \eqref{1-a9},  \eqref{1-e5}, \eqref{1-f4t}, \eqref{1-f5}, and satisfies \eqref{1-f12}. Then, Theorem \ref{theorem-1}(b) implies $(\psi_1, \Phi_1, \rho_1, S_1, \msK_1)=(\psi_2, \Phi_2, \rho_2, S_2, \msK_2)$ in $\Om_L$ from which Theorem \ref{theorem-3}(b) follows.
\end{pf}

The rest of  the paper is devoted to prove Theorem \ref{theorem-1}.

\subsection{Problem for the perturbations}
\label{subsec-pert}
Let $(\bar{\rho}, \bar{\bf u}, \bar{p}, \Phi_0)$ be a background solution satisfying all the assumptions in Theorem \ref{theorem-1}. By \eqref{1-e8} and Lemma \ref{lemma-2},
there is a constant $\kappa_0>0$ depending on the background data to satisfy
\begin{equation}
\label{1-e9}
\mG_{\varsigma}(\bar{\rho},\nabla\psi_0,\Phi_0,S_0,0)=
\frac{1}{\bar{\rho}}\left(\gam \bar{\rho}^{\gam-1} \exp(S_0)-\frac{m_0^2}{\bar{\rho}^2}\right)\ge
\kappa_0>0\quad\tx{in}\quad\ol{\Om_L}.
\end{equation}
For a constant $d>0$, define
\begin{equation*}
\begin{split}
\mathscr{N}_d(\nabla\psi_0,\Phi_0,S_0,0):=\{ &(\bm q,z,S,\msK)\in [C^0(\ol{\Om_L})]^5:\\
&\|(\bm q, z, S, \msK)-(\nabla\psi_0,\Phi_0, S_0, 0)\|_{C^0(\ol{\Om_L})}<d\}.
\end{split}
\end{equation*}
 Applying the implicit mapping theorem yields that there exists a constant $d_0>0$ depending on the background data so that for any $(\bm q, z, S, \msK)\in \mcl{N}_{d_0}(\nabla\psi_0, \Phi_0, S_0, 0)$, there exists a unique $Q(\bm q, z, S, \msK)$  such that
\begin{equation}
\label{Q}
\mG(Q(\bm q, z, S, \msK), \bm q, z, S, \msK)=0\quad\tx{in}\quad \Om_L.
\end{equation}
Therefore,  if $(\nabla\psi, \Phi, S, \msK)\in \mathscr{N}_{d_0}$, then \eqref{1-e0} yields $\rho=Q(\nabla\psi, \Phi, S, \msK)$.

The transport equations for $S$ and $\msK$ in \eqref{1-f4t} can be solved in a  similar way. Furthermore, $S$ and $\msK$ paly the similar role in the system \eqref{1-a9} and \eqref{1-e5}. Therefore, to simplify the presentation, we assume
\begin{equation}
\label{2-a7-2}
\msB_{en}+\Phi_{bd}\equiv 0\quad\tx{on}\quad \Gamen
\end{equation}
so that $\msK=0$ and we focus on the study for the transport equation for $S$ later on.

For $\rx=(x_1,x_2)\in\Om_L$ and $(\bm q,z, S,0)\in \mathscr{N}_{d_0}(\nabla\psi_0,\Phi_0,S_0,0)$, define
\begin{equation}
\label{2-b1}
\begin{split}
&A_j(\rx, \bm q, z, S)=\frac{q_j}{Q(\bm q, z, S,0)}\quad\tx{for}\;\;j=1,2,\quad
B(\rx, \bm q, z, S)=Q(\bm q, z, S,0)
\end{split}
\end{equation}
where $(\bm q, z, S)$ are evaluated at $\rx$. From now on, we denote ${\bf A}(\rx,\bm q, z, S)=(A_1,A_2)(\rx,\bm q, z, S)$ and $B(\rx,\bm q, z, S)$ by

If $C\omega_1(G_{en}, \Phi_{bd}, \Sen, \msB_{en}, \pex)\le \frac{d_0}{2}$ in Theorem \ref{theorem-1}, then
it follows from  \eqref{Q} that \eqref{1-a9} and \eqref{1-e5} can be written as
\begin{equation}
\label{2-b2}
\left\{
\begin{split}
&\Div \left({\bf A}(\rx,\nabla\psi,\Phi, S)\right)=-\frac{B^{\gam}(\rx,\nabla\psi,\Phi,S)e^S S_{x_2} }{(\gam-1)\psi_{x_2}},\\
&\Delta\Phi =B(\rx,\nabla\psi,\Phi, S)
\end{split}
\right.
\end{equation}
There exist $d_1\in(0,d_0)$ and $C>0$ depending only on the background data such that if $(\bm q, z, S, 0)\in \mathscr{N}_{d_1}(\nabla\psi_0, \Phi_0, S_0, 0)$, then $({\bf A}, B)(\rx, \bm q, z, S)$ are continuously differentiable with respect to $(\bm q, z, S)$, and they satisfy
\begin{equation*}
|D_{(\bm q, z, S)}({\bf A}, B)(\rx,\bm q, z, S)|\le C.
\end{equation*}
Set
\begin{equation*}
(\phi,\Psi):=(\psi,\Phi)-(\psi_0,\Phi_0)\quad\tx{in}\quad\Om_L.
\end{equation*}
Suppose that $(\nabla\psi, \Phi, S, 0)\in \mathscr{N}_{d_1}(\nabla\psi_0, \Phi_0, S_0, 0)$.
Then $(\psi,\Phi)$ satisfy \eqref{2-b2} if and only if $(\phi,\Psi)$ satisfy
\begin{align}
\label{2-b5}
&\mclL_1(\phi,\Psi)=f(\rx,\nabla\phi,\Psi,S,S_{x_2})+\Div{\bm F}(\rx,\nabla\phi,\Psi, S),\\
\label{2-b6}
&\mclL_2(\phi,\Psi)=g(\rx,\nabla\phi,\Psi, S),
\end{align}
where $\mclL_1$, $\mclL_2$, $f$, ${\bm F}=(F_1,F_2)$ and $g$ are defined as follows:
\begin{align}
\label{2-b7}
&\mclL_1(\phi,\Psi)=\sum_{i=1}^2\der_i
\left(\sum_{j=1}^2\der_{q_j}A_i(\rx,\nabla\psi_0,\Phi_0,S_0)\der_j\phi
+\Psi\der_zA_i(\rx,\nabla\psi_0,\Phi_0,S_0)\right),\\
\label{2-b8}
&\mclL_2(\phi,\Psi)=\Delta\Psi-\Psi\der_zB(\rx,\nabla\psi_0,\Phi_0,S_0)
-\nabla\phi\cdot\der_{\bm q}B(\rx,\nabla\psi_0,\Phi_0,S_0),
\end{align}
\begin{equation}
\label{2-b10}
f(\rx,\bm q, z, S,S_{x_2})=\frac{(S-S_0)_{x_2}B^{\gam}(\rx,\nabla\psi_0+\bm q, \Phi_0+z,S)e^{S_0+(S-S_0)}}{(\gam-1)(m_0+q_2)},
\end{equation}
\begin{equation}
\label{2-b9}
\begin{split}
F_i(\rx,\bm q, z, S)=&\sum_{j=0}^2 q_j \int_0^1[\der_{q_j}A_i(\rx,\nabla\psi_0+\tau \bm q,\Phi_0+\tau z,S_0+\tau(S-S_0))]_{\tau=t}^0\;dt\\
&-(S-S_0)\int_0^1\der_{S}A_i(\rx,\nabla\psi_0+t \bm q,\Phi_0+t z,S_0+t(S-S_0))\;dt,
\end{split}
\end{equation}
\begin{equation}
\label{2-b11}
\begin{split}
g(\rx,\bm q, z, S)=&\sum_{j=0}^2 q_j \int_0^1[\der_{q_j}B(\rx,\nabla\psi_0+\tau \bm q,\Phi_0+\tau z,S_0+\tau(S-S_0))]_{\tau=0}^t\;dt\\
&+(S-S_0)\int_0^1\der_{S}B(\rx,\nabla\psi_0+t \bm q,\Phi_0+t z,S_0+t(S-S_0))\;dt
\end{split}
\end{equation}
with $q_0=z$.
Here $\der_1$ and $\der_2$ denote $\der_{x_1}$ and $\der_{x_2}$, respectively. And, $[k(\tau)]_{\tau=0}^t$ denotes $k(t)-k(0)$.

The transport equation for $S$ in \eqref{1-f4t} and the associated boundary condition at $\Gamen$ can be written as
\begin{equation}\label{eqS}
\nabla^\perp (\phi+\psi_0)\cdot \nabla S=0\,\, \text{in}\,\, \Omega_L
, \,\, S=S_{en}\,\, \text{on}\, \, \Gamma_0.
\end{equation}

Next, we compute boundary conditions for $(\phi, \Psi)$ corresponding to \eqref{1-f5}.
Solve \eqref{1-e0} for $|\nabla\psi|^2$ and substitute $(\rho, \Phi, \msK)=((\frac{p_{ex}}{e^S})^{1/\gam}, \Phi_{bd}, 0)$ given from \eqref{1-f5} so that we obtain the expression
\begin{equation}
\label{2-c5}
|\nabla\psi|^2=-2\left(\frac{p_{ex}}{e^S}\right)^{\frac{2}{\gam}}
\left(\Phi_{bd}-\frac{\gam e^{S/\gam}
p_{ex}^{1-\frac{1}{\gam}}}{\gam-1}\right)\quad\tx{on}\quad \Gamex.
\end{equation}
It follows from \eqref{1-e6} that \eqref{2-c5} can be written as a boundary condition for $\phi$ as follows:
\begin{equation}
\label{2-c6}
\phi_{x_2}=-\frac{1}{m_0}\left(h_1(\rx, S)+\frac 12 |\bm q|^2\right)=:h(\rx,\nabla\phi, S)\quad\tx{on}\quad\Gamex
\end{equation}
with $h_1(\rx, S)$ defined by
\begin{equation}
\label{2-c7}
\begin{split}
h_1(\rx, S)=\left(\frac{p_{ex}}{e^S}\right)^{\frac{2}{\gam}}
\left(\Phi_{bd}-\frac{\gam p_{ex}^{1-\frac{1}{\gam}}e^{S/\gam}}{\gam-1} \right)-
\left(\frac{\bar p(L)}{e^{S_0}}\right)^{\frac{2}{\gam}}
\left(\Phi_{0}-\frac{\gam e^{S_0/\gam}\bar p^{1-\frac{1}{\gam}}(L)}{\gam-1}\right).
\end{split}
\end{equation}
The rest of boundary conditions for $(\psi,\Phi)$ in \eqref{1-f5} is written in terms of $(\phi,\Psi)$ as follows:
\begin{equation}\label{2-d1bc}
\left\{
\begin{aligned}
&\Psi_{x_1}=G_{en}-G_0=:g_{en}\,\,\tx{on}\,\, \Gamen,\,\, \Psi=\Phi_{bd}-\Phi_0=:\Psi_{bd}\,\,\tx{on}\;\; \Gamex,\quad
\Psi_{x_2}=0\,\,\tx{on}\;\;\Gamw,\\
&\phi(0, -1) =0,\quad \phi_{x_1}=0\,\,\tx{on}\;\;\Gamen,\quad
\phi_{x_1}=0\,\,\tx{on}\;\;\Gamw.
\end{aligned}
\right.
\end{equation}
If $(\nabla\psi, \Phi, S, 0)\in \mathscr{N}_{d_1}(\nabla\psi_0, \Phi_0, S_0, 0)$ and $\psi_y \ge \frac{m_0}{10}$ in $\ol{\Om}_L$, then $(\psi, \Phi, S, 0)$ solve \eqref{1-a9},  \eqref{1-e5}, \eqref{1-f4t}, and \eqref{1-f5} if and only if $(\phi, \Psi, S)$ solve \eqref{2-b5}, \eqref{2-b6}, \eqref{eqS}, and \eqref{2-d1bc}. To prove Theorem \ref{theorem-1}, it suffices to solve the nonlinear boundary value problem \eqref{2-b5}, \eqref{2-b6}, \eqref{eqS}, and \eqref{2-d1bc} for $(\phi, \Psi, S)$ for $C\omega_1(G_{en}, \Phi_{bd}, S_{en}, \msB_{en}, \pex)\le \min(\frac{d_1}{2}, \frac{m_0}{2})$.

Given $S$, the equations \eqref{2-b7} and \eqref{2-b8} form a nonlinear elliptic system for $(\phi, \Psi)$ provided that $\|\phi\|_{C^1(\ol{\Om_L})}+\|\Psi\|_{C^0(\ol{\Om_L})}$ is sufficiently small. Given $\phi$ with $\|\phi\|_{C^1(\ol{\Om_L})}$ small, \eqref{eqS} can be regarded as an initial value problem for $S$.
Based on this observation, we prove solvability of \eqref{2-b5}, \eqref{2-b6}, \eqref{eqS}, \eqref{2-d1bc} by the method of iteration.
For that purpose, it is crucial
to study the boundary value problem for a linear elliptic system associated with \eqref{2-b5}, \eqref{2-b6}, and \eqref{2-d1bc}.
\section{Linear boundary value problem}
\label{section-3}
Fix $\alp\in(0,1)$. Given $\mff\in C^{\alp}(\ol{\Om_L})$, $\mfg\in C^{\alp}(\ol{\Om_L})$, $g_{en}\in C^{\alp}(\ol{\Gamen})$, $\mfh\in C^{\alp}(\ol{\Gamex})$ and ${\bf \mfF}\in (C^{1,\alp}_{(-\alp,\corners)}({\Om_L}))^2$,
consider the
linear boundary value problem,
\begin{align}
\label{3-a1}
&\begin{cases}
\mclL_1(\phi,\Psi)=\mff+\Div {\bf \mfF}\\
\mclL_2(\phi,\Psi)=\mfg
\end{cases}\quad \tx{in}\quad\Om_L
\end{align}
with boundary conditions
\begin{align}
\label{3-a1-b1}
&\Psi_{x_1}=g_{en}\,\,\tx{on}\quad \Gamen,\,\, \Psi=\Psi_{bd}\,\,\tx{on}\quad \Gamex,\quad
\Psi_{x_2}=0\,\,\tx{on}\,\,\Gamw,\\
\label{3-a1-b2}
&\phi(0,-1)=0,\,\, \phi_{x_1}=0\,\,\tx{on}\,\, \Gamen,\,\,
\phi_{x_1}=0\,\,\tx{on}\,\, \Gamw,\,\,
\phi_{x_2}=\mfh\,\,\tx{on}\,\, \Gamex.
\end{align}
If $\phi\in C^1(\ol{\Om_L})$, then the boundary conditions \eqref{3-a1-b2} are equivalent to
\begin{equation}
\label{3-b3}
\begin{cases}
\phi_{x_1}=0&\tx{on}\quad \Gamen,\\
\phi(x_1,-1)=0,\quad \phi(x_1,1)=\int_{-1}^1\mfh(z)\;dz&\tx{on}\quad \Gamw,\\
\phi(L,x_2)=\int_{-1}^{x_2} \mfh(z)\;dz&\tx{on}\quad \Gamex.
\end{cases}
\end{equation}
So we consider the boundary value problem \eqref{3-a1}, \eqref{3-a1-b1} and \eqref{3-b3} in the rest of this section.

\begin{proposition}\label{proposition-2}
Let $\mclL_1$ and  $\mclL_2$ be defined by \eqref{2-b7} and \eqref{2-b8} for a background solution $(\bar{\rho}, \bar{\bf u}, \bar p, \Phi_0)$
satisfying the assumptions in Theorem \ref{theorem-1}.
Then there exists a unique solution $(\phi,\Psi)\in [C^{1,\alp}(\ol{\Om_L})\cap C^2(\Om_L)]^2$ to
 \eqref{3-a1}, \eqref{3-a1-b1} and \eqref{3-b3} satisfying
 \begin{equation}
 \label{3-e3}
 \begin{aligned}
 \|(\phi,\Psi)\|_{2,\alp,\Om_L}^{(-1-\alp,\corners)}\le& C\left(\|(\mff,\mfg)\|_{\alp,\Om_L}+\|{\bm \mfF}\|_{1,\alp,\Om_L}^{(-\alp,\corners)}+
 \|g_{en}\|_{1,\alp,\Gamen}^{(-1-\alp,\der\Gamen)} \right.\\
 &\quad \,\, \left.+\|\Psi_{bd}\|_{2,\alp,\Om_L}^{(-1-\alp,\corners)}
 +\|\mfh\|_{1,\alp,\Gamex}^{(-\alp,\partial\Gamw)}
 \right)
 \end{aligned}
 \end{equation}
 where the constant $C$ depends only on the background data and $\alp$.

\end{proposition}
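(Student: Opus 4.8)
The plan is to solve the linear boundary value problem \eqref{3-a1}, \eqref{3-a1-b1}, \eqref{3-b3} in three stages: reduce to homogeneous boundary data, construct a weak $H^1$ solution from an energy estimate combined with the Lax--Milgram theorem, and then bootstrap to the weighted $C^{2,\alpha}$ bound \eqref{3-e3}.

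\emph{Reduction.} First I would lift the inhomogeneous data. Take $\phi^\sharp(x_1,x_2):=\int_{-1}^{x_2}\mfh(z)\,dz$; by \eqref{1-e6} its $x_1$-derivative vanishes identically and its traces on $\Gamw$ and $\Gamex$ are exactly those prescribed in \eqref{3-b3}, with $\|\phi^\sharp\|_{2,\alpha,\Om_L}^{(-1-\alpha,\corners)}\le C\|\mfh\|_{1,\alpha,\Gamex}^{(-\alpha,\der\Gamw)}$; similarly choose $\Psi^\sharp$ realizing $\Psi^\sharp_{x_1}=g_{en}$ on $\Gamen$, $\Psi^\sharp=\Psi_{bd}$ on $\Gamex$, $\Psi^\sharp_{x_2}=0$ on $\Gamw$, with norm controlled by $\|g_{en}\|_{1,\alpha,\Gamen}^{(-1-\alpha,\der\Gamen)}+\|\Psi_{bd}\|_{2,\alpha,\Om_L}^{(-1-\alpha,\corners)}$. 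Since $\mclL_1,\mclL_2$ are linear and $\mclL_1$ is in divergence form, $(\tilde\phi,\tilde\Psi):=(\phi,\Psi)-(\phi^\sharp,\Psi^\sharp)$ solves a problem of the same type with data $(\tilde\mff,\tilde\mfg,{\bm \mfF})$ whose norms are dominated by the right-hand side of \eqref{3-e3}, subject now to homogeneous conditions $\tilde\phi_{x_1}=0$ on $\Gamen$, $\tilde\phi=0$ on $\Gamw\cup\Gamex$, $\tilde\Psi_{x_1}=0$ on $\Gamen$, $\tilde\Psi_{x_2}=0$ on $\Gamw$, $\tilde\Psi=0$ on $\Gamex$.

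\emph{Energy estimate and existence.} This is the core. Evaluating the coefficients in \eqref{2-b7}, \eqref{2-b8} at the background (where $\nabla\psi_0=(0,m_0)$) via the implicit-function identities $\der_zQ=-1/\mG_\varsigma$ and $\der_{q_j}Q=-\mG_{q_j}/\mG_\varsigma$, one finds that $\mclL_1$ has diagonal, uniformly elliptic principal part $\der_1(a_{11}\der_1\tilde\phi)+\der_2(a_{22}\der_2\tilde\phi)$ with $a_{11}=1/\bar\rho$ and $a_{22}=1/\bar\rho+2m_0^2/(\bar\rho^4\mG_\varsigma)$, coupled to $\tilde\Psi$ only through $\der_2(b\,\tilde\Psi)$ with $b=m_0/(\bar\rho^2\mG_\varsigma)$, while $\mclL_2(\tilde\phi,\tilde\Psi)=\Delta\tilde\Psi+\mG_\varsigma^{-1}\tilde\Psi+2b\,\der_2\tilde\phi$. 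I would test the first equation with $\tilde\phi$ and the second with $\tfrac12\tilde\Psi$ and add: all boundary integrals vanish except a harmless $\int_{\Gamen}\tilde\phi\,{\bm \mfF}\cdot{\bf n}$, and, crucially, the two coupling contributions $-\int_{\Om_L}b\,\tilde\Psi\,\der_2\tilde\phi$ and $+\int_{\Om_L}b\,\tilde\Psi\,\der_2\tilde\phi$ cancel exactly, leaving
\begin{equation*}
\int_{\Om_L}\big(a_{11}\tilde\phi_{x_1}^2+a_{22}\tilde\phi_{x_2}^2\big)+\frac12\int_{\Om_L}|\nabla\tilde\Psi|^2-\frac12\int_{\Om_L}\frac{\tilde\Psi^2}{\mG_\varsigma}=(\text{controlled data terms}).
\end{equation*}
The third integral has the wrong sign, coming from the positive zeroth-order coefficient in the linearized Poisson equation; it is absorbed using the Poincar\'e inequality $\|\tilde\Psi\|_{L^2(\Om_L)}^2\le\tfrac{L^2}{2}\|\der_{x_1}\tilde\Psi\|_{L^2(\Om_L)}^2$ (valid since $\tilde\Psi=0$ on $\Gamex=\{x_1=L\}$) together with \eqref{estd0}, which by \eqref{1-e9} states precisely that $\mG_\varsigma^{-1}\le 2(1-\delta_0)/L^2$; hence $\tfrac12\int\tilde\Psi^2/\mG_\varsigma\le\tfrac{1-\delta_0}{2}\int|\nabla\tilde\Psi|^2$ and the left side dominates $\kappa\|\nabla\tilde\phi\|_{L^2}^2+\tfrac{\delta_0}{2}\|\nabla\tilde\Psi\|_{L^2}^2$. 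Since $\tilde\phi$ vanishes on $\Gamw\cup\Gamex$, Poincar\'e upgrades this to full $H^1$ coercivity, and Young's inequality gives $\|(\tilde\phi,\tilde\Psi)\|_{H^1(\Om_L)}\le C(\|\tilde\mff\|_{L^2}+\|\tilde\mfg\|_{L^2}+\|{\bm \mfF}\|_{L^2})$. The same computation shows the bilinear form associated with $\{(\phi,\Psi)\in H^1(\Om_L)^2:\phi|_{\Gamw\cup\Gamex}=0,\ \Psi|_{\Gamex}=0\}$ is bounded and coercive (the coupling is antisymmetric and so does not disturb coercivity), whence Lax--Milgram yields a unique weak solution; uniqueness for any $C^2$ solution follows at once, which already gives the uniqueness assertion of the Proposition.

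\emph{Regularity.} Finally I would bootstrap. Away from the four corners of $\Om_L$ one views \eqref{3-a1} as two scalar equations --- $\mclL_1$ elliptic in $\phi$ with the coupling moved to the right side as $\Div(0,b\tilde\Psi)$, and $\Delta\tilde\Psi=\tilde\mfg-\mG_\varsigma^{-1}\tilde\Psi-2b\der_2\tilde\phi$ --- and alternately applies interior and boundary $L^p$ and Schauder estimates (Dirichlet on the portions where $\phi$ resp.\ $\Psi$ is prescribed, conormal/Neumann elsewhere) to reach $C^{2,\alpha}$ up to $\ol{\Om_L}$ minus the corners. At each corner a Dirichlet and a conormal condition meet at a right angle; I would invoke the weighted Schauder estimates for such mixed boundary value problems in a corner from \cite{XX3,BDX} to get the a priori bound $\|(\tilde\phi,\tilde\Psi)\|_{2,\alpha,\Om_L}^{(-1-\alpha,\corners)}\le C\big(\|(\tilde\mff,\tilde\mfg)\|_{\alpha,\Om_L}+\|{\bm \mfF}\|_{1,\alpha,\Om_L}^{(-\alpha,\corners)}+\|(\tilde\phi,\tilde\Psi)\|_{L^\infty(\Om_L)}\big)$, and then remove the last term by interpolation, $\|(\tilde\phi,\tilde\Psi)\|_{L^\infty}\le\eps\|(\tilde\phi,\tilde\Psi)\|_{2,\alpha,\Om_L}^{(-1-\alpha,\corners)}+C_\eps\|(\tilde\phi,\tilde\Psi)\|_{L^2}$, feeding in the $H^1$ bound from the energy estimate; adding back $(\phi^\sharp,\Psi^\sharp)$ yields \eqref{3-e3}. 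The genuinely new and delicate ingredient is the coercivity step: the scalar linearized Poisson operator alone is not coercive because of the positive coefficient $\mG_\varsigma^{-1}$, and it is only the exact cancellation of the coupling terms together with the smallness encoded in \eqref{estd0} that restores it; the corner analysis, by contrast, is technical but essentially borrowed from \cite{XX3,BDX}.
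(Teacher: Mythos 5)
Your proposal follows essentially the same route as the paper's proof: homogenize the boundary data with the explicit lifts $\int_{-1}^{x_2}\mfh$ and an extension of $\Psi_{bd}$, obtain coercivity of the coupled bilinear form from the exact cancellation of the coupling terms together with the Poincar\'e inequality and condition \eqref{estd0} absorbing the negative zeroth-order term, apply Lax--Milgram, and then decouple the system and bootstrap with mixed-boundary Schauder estimates of Han--Lin/\cite{BDX} type plus a scaling argument to reach the weighted $C^{2,\alpha}$ bound \eqref{3-e3}. The only differences are cosmetic: your linearized coefficients carry a spurious factor $2$ (inherited from the missing $\tfrac12$ in the displayed formula \eqref{1-e8} for $\mG$), which you compensate by testing the second equation with $\tfrac12\tilde\Psi$, whereas with the intended $\mG$ one has $\mfrakb_i+\mfrakc_i=0$ and tests with $(\tilde\phi,\tilde\Psi)$ directly, exactly as in the paper's Lemma \ref{lemma-3}.
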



For $\rx=(x_1,x_2)\in \Om_L$, define
\begin{equation*}
\Psi_{bd}^*(\rx):=\Psi_{bd}(L,x_2),\,\,
\quad
\,\,
\mfH(x_1,x_2)=\int_{-1}^{x_2} \mfh(z)\;dz.
\end{equation*}
If $\Psi_{bd}\in C^{2,\alp}_{(-1-\alp,\der\Gamex)}(\Gamex)$ satisfies \eqref{1-f10},
then one has
\begin{equation*}
(\Psi-\Psi_{bd}^*)_{x_1}=g_{en}\,\,\tx{on}\quad \Gamen,\quad \Psi-\Psi_{bd}^*=0\,\, \text{on} \,\,\Gamex,\quad
\der_{x_2}(\Psi-\Psi_{bd}^*)=0\,\,\tx{on}\quad\Gam_w.
\end{equation*}
Set
\begin{equation}
\label{3-b4}
\mfu(x_1,x_2):=\phi(x_1,x_2)- \mfH(x_1, x_2),\quad \mfV(x_1,x_2):=\Psi(x_1,x_2)-\Psi_{bd}^*(x_1,x_2).
\end{equation}
To simplify notations, we write
\begin{equation}
\label{3-a2}
\begin{split}
&\mfraka_{ij}(\rx)=\der_{q_j}A_i(\rx,\nabla\psi_0,\Phi_0, S_0),\quad
\mfrakb_i(\rx)=\der_zA_i(\rx,\nabla\psi_0,\Phi_0, S_0)\\
&\mfrakc_i(\rx)=\der_{q_i}B(\rx,\nabla\psi_0,\Phi_0, S_0),\quad\quad
\mfrakd(\rx)=\der_zB(\rx,\nabla\psi_0,\Phi_0, S_0)
\end{split}
\end{equation}
for $i,j=1,2$.
Note that $\mfraka_{ij}, \mfrakb_i, \mfrakc_i$ and $\mfrakd$ are independent of $x_2$ for $\rx=(x_1,x_2)\in\Om_L$.

Then
$(\phi,\Psi)$ solve \eqref{3-a1}, \eqref{3-a1-b1} and \eqref{3-b3} if and only if $(\mfu, \mfV)$ solve
\begin{equation}
\label{3-b5}
\begin{split}
&\begin{cases}
\mclL_1(\mfu,\mfV)=\mff+\Div {\bm \mfF}-\mclL_1(\mfH,\Psi_{bd}^*)=:\mff+\Div{\bm \mfF}^*\\
\mclL_2(\mfu,\mfV)=\mfg-\mclL_2(\mfH,\Psi_{bd}^*)=:\mfg^*+\Div{\bf \mfG}^*
\end{cases}\quad \tx{in}\quad\Om_L
\end{split}
\end{equation}
with 
boundary conditions
\begin{align}
\label{3-b5bc1}
&\mfV_{x_1}=g_{en}\,\,\tx{on}\quad \Gamen,\quad \mfV=0\,\,\tx{on}\quad \Gamex,\quad \mfV_{x_2}=0\,\,\tx{on}\quad \Gamw,\\
\label{3-b5bc2}
&\mfu_{x_1}=0\,\,\tx{on}\quad \Gamen,\quad \mfu=0\,\,\tx{on}\quad \Gamw\cup\Gamex,
\end{align}
where
\begin{equation}
\label{3-b6}
\begin{split}
&{\bf \mfF}^*={\bf \mfF}-(0,{\mfraka}_{22}\der_2\mfH)-(\mfrakb_1,\mfrakb_2)\Psi^*_{bd},\quad
{\bf \mfG}^*=-\nabla\Psi^*_{bd}, \quad \mfg^*=\mfg+\Psi^*_{bd}\mfrakd+\mfrakc_2\der_2\mfH.
\end{split}
\end{equation}
To prove Proposition \ref{proposition-2}, it suffices to show
that \eqref{3-b5}-\eqref{3-b5bc2} is uniquely solvable.

\begin{lemma}
\label{lemma-1}
Let $\mfraka_{ij}$, $\mfrakb_i$, $\mfrakc_i$, and $\mfrakd$ be defined
by \eqref{3-a2} for a background solution $(\bar{\rho}, \bar{\bf u}, \bar p, \Phi_0)$
satisfying the assumptions in Theorem \ref{theorem-1}. Then, they
satisfy the following properties:
\begin{itemize}
\item[(a)] The matrix $[\mfraka_{ij}]_{i,j=1}^2$ is diagonal, and there exists a constant $\lambda_0>0$ satisfying
\begin{equation}
\label{3-a3}
\lambda_0|{\bm \xi}|^2\le \sum_{i,j=1}^2\mfraka_{ij}(\rx)\xi_i\xi_j\le \frac{1}{\lambda_0}|{\bm \xi}|^2\quad \text{for all}\,\,\rx\in \ol{\Om_L}\,\,\text{and}\,\, {\bm \xi}=(\xi_1,\xi_2)\in\R^2.
\end{equation}
\item[(b)] For each $k\in\mathbb{Z}_+$, there exists a constant $\mcl{C}_k$ satisfying
   \begin{equation*}
   \sum_{i,j=1}^2\|\mfraka_{ij}\|_{k,\Om_L}
   +\sum_{i=1}^2\|(\mfrakb_i,\mfrakc_i)\|_{k,\Om_L}
   +\|\mfrakd\|_{k,\Om_L}\le \mcl C_k.
    \end{equation*}
\item[(c)] For each $i=1,2$, we have
\begin{equation*}
\mfrakb_i(\rx)+\mfrakc_i(\rx)=0\quad \text{in}\,\, \Omega_L.
\end{equation*}

\item[(d)] 
For $\delta_0\in(0,1)$ from \eqref{estd0}, $\mfrakd$ satisfies
\begin{equation*}
-\frac{2}{L^2}(1-\delta_0) \le \mfrakd<0
\quad\tx{in}\quad \Om_L.
\end{equation*}
\end{itemize}
Here, $\lambda_0$ depends on the data, and $\mcl{C}_k$ depends on $k$ and the data.
\begin{proof}
The direct computations using \eqref{1-e8}, \eqref{2-b1} and \eqref{3-a2} give
\begin{equation}
\label{3-a7}
\begin{split}
&\mfraka_{ij}=\frac{1}{\gam e^{S_0}\bar\rho^{\gam}-\frac{m_0^2}{\bar\rho}}
\left((\gam e^{S_0}\bar\rho^{\gam-1}-\frac{m_0^2}{\bar\rho^2})\delta_{ij}
+\frac{m_0^2\delta_{i2}\delta_{j2}}{\bar\rho^2}\right),\\
&\mfrakb_i=-\mfrakc_i=\frac{m_0\delta_{i2}}{\gam e^{S_0}\bar\rho^{\gam}-\frac{m_0^2}{\bar\rho}},\quad
\mfrakd=\frac{-\bar\rho}{\gam e^{S_0}\bar\rho^{\gam-1}-\frac{m_0^2}{\bar\rho^2}}
\end{split}
\end{equation}
for $i,j=1,2$, where $\delta_{ij}$ is the Kronecker symbol satisfying $\delta_{ij}=1$ for $i=j$, $\delta_{ij}=0$ for $i\neq j$. Then (a)--(d) easily follow from \eqref{1-b9}, \eqref{estd0}, \eqref{3-a7} and Proposition \ref{proposition-1}.
\end{proof}
\end{lemma}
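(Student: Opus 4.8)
The plan is to establish the explicit representation \eqref{3-a7} of the coefficients and then read off (a)--(d) from it. First, recall from \eqref{Q} that $Q=Q(\bm q,z,s)$ is characterized by $\mG(Q,\bm q,z,s,0)=0$, and that \eqref{1-e9} provides the nondegeneracy $\mG_\varsigma\ge\kappa_0>0$ along the background state, so the implicit function theorem applies and one may freely divide by $\mG_\varsigma$. Differentiating $\mG(Q,\bm q,z,s,0)\equiv0$ in $\bm q$ and in $z$ gives $\der_{q_j}Q=-\mG_{q_j}/\mG_\varsigma$ and $\der_zQ=-1/\mG_\varsigma$ (using $\mG_z\equiv1$), the right-hand sides evaluated at $\varsigma=Q$. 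Since $A_i=q_i/Q$ and $B=Q$, the chain rule yields
$$\der_{q_j}A_i=\frac{\delta_{ij}}{Q}+\frac{q_i\mG_{q_j}}{Q^2\mG_\varsigma},\qquad \der_zA_i=\frac{q_i}{Q^2\mG_\varsigma},\qquad \der_{q_i}B=-\frac{\mG_{q_i}}{\mG_\varsigma},\qquad \der_zB=-\frac{1}{\mG_\varsigma}.$$
A short computation from the explicit form of $\mG$ gives $\mG_{q_i}=q_i/Q^2$ at $\varsigma=Q$, so $\mG_{q_i}$ is a multiple of $q_i$; in particular $\der_zA_i=-\der_{q_i}B$ pointwise, which is already property (c).

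Next I would evaluate at the background state $(\bm q,z,s)=(\nabla\psi_0,\Phi_0,S_0)$, where $\nabla\psi_0=(0,m_0)$ and $Q=\bar\rho$, so $q_1=0$, $q_2=m_0$, and $\mG_\varsigma|_{\rm bg}=\bar\rho^{-1}(\gam e^{S_0}\bar\rho^{\gam-1}-m_0^2\bar\rho^{-2})$ by \eqref{1-e9}. Substituting these into the formulas above and simplifying produces exactly \eqref{3-a7}; in particular $\mfraka_{12}=\mfraka_{21}=0$ since $q_1=0$, and every coefficient is a smooth function of $\bar\rho(x_1)$ alone, hence independent of $x_2$. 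The remaining properties then follow from \eqref{3-a7}. Because $\rho_0>\rhos$, Lemma \ref{lemma-2} gives $\bar\rho>\rhos$ on $[0,L]$ together with the bound \eqref{1-b9}, i.e. $\gam e^{S_0}\bar\rho^{\gam-1}-m_0^2\bar\rho^{-2}\ge\nu_0>0$; and $\bar\rho$, being continuous on the compact interval $[0,L]$, is bounded above and below by positive constants. Hence the diagonal entries $\mfraka_{11},\mfraka_{22}$ in \eqref{3-a7} are positive with uniform two-sided bounds, which with diagonality gives the ellipticity \eqref{3-a3} in (a); each coefficient is a smooth function of $\bar\rho$, and $\bar\rho\in C^\infty([0,L])$ by Proposition \ref{proposition-1} stays away from $0$, $\infty$, and $\rhos$, giving the $C^k$ bounds in (b); and $\mfrakd=-\bar\rho/(\gam e^{S_0}\bar\rho^{\gam-1}-m_0^2\bar\rho^{-2})<0$, while \eqref{estd0} rearranges to $\bar\rho/(\gam e^{S_0}\bar\rho^{\gam-1}-m_0^2\bar\rho^{-2})\le2(1-\delta_0)/L^2$, which is the lower bound in (d).

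The differentiation in the first step and the substitution in the second are routine; I do not expect a serious obstacle. The only genuine inputs are the nondegeneracy $\mG_\varsigma\ge\kappa_0>0$ from \eqref{1-e9} (valid because $\rho_0>\rhos$, via Lemma \ref{lemma-2}), which is needed both to invoke the implicit function theorem and to keep all the quotients above well defined and uniformly bounded on $\ol{\Om_L}$, and the structural hypothesis \eqref{estd0}, which is exactly what converts the a priori sign $\mfrakd<0$ into the quantitative lower bound $\mfrakd\ge-2(1-\delta_0)/L^2$ used later to guarantee solvability of the linear elliptic system in Proposition \ref{proposition-2}.
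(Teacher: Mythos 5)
Your proposal is correct and follows essentially the same route as the paper: the paper likewise obtains the explicit formulas \eqref{3-a7} by direct (implicit) differentiation of $\mG(Q,\bm q,z,s,0)=0$ evaluated at the background state, and then reads off (a)--(d) from \eqref{1-b9}, \eqref{estd0}, and Proposition \ref{proposition-1}; you merely spell out the chain-rule computation that the paper compresses into one line. (Note that your use of $\mG_{q_i}=q_i/\varsigma^2$ tacitly corrects the missing factor $\tfrac12$ in the stated form of \eqref{1-e8}, which is the reading consistent with \eqref{1-e9} and \eqref{3-a7}.)
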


Define $\mcl{H}= \{(\zeta,\omega)\in [H^1(\Om_L)]^2:\zeta=0\;\tx{on}\;\Gamw\cup\Gamex, \omega=0\;\tx{on}\;\Gamex\}$, which is a Hilbert space.
If $(\mfu,\mfV)\in\mcl{H}$ satisfy
\begin{equation}
\label{3-c1}
\mfrakL[(\mfu,\mfV),(\zeta,\omega)]=\langle(\mff,{\bm \mfF}^*,\mfg^*,{\bf \mfG}^*, g_{en}),(\zeta,\omega)\rangle
\end{equation}
for all $(\zeta,\omega)\in \mcl{H}$, where $\mfrakL$ and $\langle\cdot,\cdot\rangle$ are defined as follows:
\begin{equation}
\label{3-b7}
\mfrakL[(\mfu,\mfV),(\zeta,\omega)]
=\int_{\Om_L}\sum_{i=1}^2(\mfraka_{ii}\der_i\mfu+\mfrakb_i\mfV)\der_i\zeta+\nabla\mfV\cdot\nabla\omega
+(\mfrakd\mfV+ \sum_{i=1}^2\mfrakc_i \partial_i\mfu)\omega\;d\rx
\end{equation}
and
\begin{equation*}
\langle(\mff,{\bm \mfF}^*,\mfg^*,{\bf \mfG}^*),(\zeta,\omega)\rangle=
\int_{\Om_L}{\bm \mfF}^*\cdot\nabla\zeta+{\bf \mfG}^*\cdot\nabla\omega\;d\rx-
\int_{\Om_L}\mff\zeta+\mfg^*\omega\;d\rx+\int_{\Gamen}({\mfF}_1^*\zeta-g_{en}\omega) \;dx_2,
\end{equation*}
  then  we say that $(\mfu,\mfV)$ is a weak solution of \eqref{3-b5}--\eqref{3-b5bc2}.
If $(\mfu,\mfV)\in [C^1(\ol{\Om_L})\cap C^2(\Om_L)]^2$ solve \eqref{3-b5}, then they must be a weak solution.
\begin{lemma}
\label{lemma-3}
Let $\mfraka_{ij}$, $\mfrakb_i$, $\mfrakc_i$, and $\mfrakd$ be defined
by \eqref{3-a2} for a background solution $(\bar{\rho}, \bar{\bf u}, \bar p, \Phi_0)$
satisfying the assumptions in Theorem \ref{theorem-1}.
Then \eqref{3-b5}--\eqref{3-b5bc2} has a  unique weak solution $(\mfu,\mfV)\in\mcl{H}$ satisfying the estimate
\begin{equation}
\label{3-c2}
\|(\mfu,\mfV)\|_{H^1(\Om_L)}\le C(\|(\mff,  \mfg^*,{\bf \mfG}^*, {\bm\mfF}^*\|_{C^0(\ol{\Om_L})}+\|g_{en}\|_{C^0(\ol{\Gamen})} )
\end{equation}
where the constant $C$ depends only on the data and $\delta_0$.
\begin{proof}
By Lemma \ref{lemma-1}(c),
\begin{equation*}
\mfrakL[(\zeta,\omega),(\zeta,\omega)]=\int_{\Om_L}\sum_{i=1}^2\mfraka_{ii}(\der_i\zeta)^2+|\nabla\omega|^2
+\mfrakd\omega^2\;d\rx
\end{equation*}
holds for all $(\zeta,\omega)\in \mcl{H}$. For each $w\in H^1({\Om_L})$ with $w=0$ on $\Gamen$, fundamental theorem of calculus and $C^1$-approximations of $H^1$ functions
give
\begin{equation}
\label{p-ineq}
\int_{\Om_L} w^2\;d\rx\le \frac 12 L^2\int_{\Om_L} w_{x_1}^2\;d\rx
\end{equation}
from which we get
\begin{equation*}
\mfrakL[(\zeta,\omega),(\zeta,\omega)]\ge \int_{\Om_L}
\sum_{i=1}^2\mfraka_{ii}(\der_i\zeta)^2+(1
-\frac{L^2}{2}\|\mfrakd\|_{C^0(\ol{\Om_L})})|\nabla\omega|^2\;d\rx.
\end{equation*}
It follows from \eqref{3-a3} and \eqref{estd0}  that
\begin{equation}\label{poincareest}
\mfrakL[(\zeta,\omega),(\zeta,\omega)]\ge
\int_{\Om_L}\lambda_0|\nabla\zeta|^2+{ \delta_0|\nabla\omega|^2\;d\rx} \quad\tx{for all}\;\;(\zeta,\omega)\in\mcl{H}.
\end{equation}
Combining \eqref{poincareest}
with Poincar\'{e} inequality yields
\begin{equation*}
\mfrakL[(\zeta,\omega),(\zeta,\omega)]\ge C \|(\zeta, \omega)\|^2_{H^1(\Omega_L)} \quad\tx{for all}\;\;(\zeta,\omega)\in\mcl{H}
\end{equation*}
for a constant $C$ depending only on the data and $\delta_0$.
This implies that the bilinear operator $\mfrakL:\mcl{H}\times \mcl{H}\rightarrow \R$ is coercive. Furthermore, it follows from H\"{o}lder inequality, trace inequality, and Poincar\'{e} inequality that 
\begin{equation*}
|\langle(\mff,{\bf \mfF}^*, \mfg^*,{\bf \mfG}^*, g_{en}),(\zeta,\omega)\rangle|\le \hat{C}\|(\zeta,\omega)\|_{H^1(\Om)}
(\|(\mff,  \mfg^*,{\bf \mfG}^*, {\bm\mfF}^*\|_{C^0(\ol{\Om_L})}+\|g_{en}\|_{C^0(\ol{\Gamen})} )
\end{equation*}
holds where the constant $\hat{C}>0$ is independent of the data.
 Hence we can apply Lax-Milgram theorem to  \eqref{3-c1} to conclude that, for any given $(\mff,  \mfg^*,{\bf \mfG}^*, {\bf \mfF}^*, g_{en})\in[C^0(\ol{\Om_L})]^6\times C^0(\ol{\Gamen})$, there exists a unique $(\mfu,\mfV) \in \mcl{H}$ satisfying \eqref{3-c1}. Furthermore such $(\mfu,\mfV)$ satisfy
\eqref{3-c2}.
\end{proof}
\end{lemma}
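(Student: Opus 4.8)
The plan is to obtain the weak solution by the Lax--Milgram theorem applied to the bilinear form $\mfrakL$ on the Hilbert space $\mcl{H}$. The two things to verify are coercivity of $\mfrakL$ and boundedness of the linear functional on the right-hand side of \eqref{3-c1}; uniqueness and the estimate \eqref{3-c2} then come for free from Lax--Milgram. First I would test $\mfrakL$ against the diagonal pair $(\zeta,\omega)$. Here the crucial algebraic cancellation is Lemma \ref{lemma-1}(c): since $\mfrakb_i+\mfrakc_i=0$, the cross terms $\mfrakb_i\mfV\der_i\zeta$ from the first equation and $\mfrakc_i(\der_i\mfu)\omega$ from the second combine (after noting the $x_2$-independence of the coefficients recorded below \eqref{3-a2}) so that $\mfrakL[(\zeta,\omega),(\zeta,\omega)]$ reduces to the manifestly sign-definite expression $\int_{\Om_L}\sum_i \mfraka_{ii}(\der_i\zeta)^2+|\nabla\omega|^2+\mfrakd\,\omega^2\,d\rx$, with no indefinite mixed term surviving.

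The second step is to absorb the (negative) zeroth-order term $\mfrakd\,\omega^2$. Since every $\omega\in\mcl{H}$ vanishes on $\Gamex$, hence in particular on part of $\der\Om_L$ lying on a vertical segment, a one-dimensional integration in $x_1$ together with Cauchy--Schwarz and Fubini gives the weighted Poincar\'e inequality \eqref{p-ineq}: $\int_{\Om_L}\omega^2\,d\rx\le \tfrac12 L^2\int_{\Om_L}\omega_{x_1}^2\,d\rx\le \tfrac12 L^2\int_{\Om_L}|\nabla\omega|^2\,d\rx$. Combining this with Lemma \ref{lemma-1}(d), which bounds $\mfrakd$ from below by $-\tfrac{2}{L^2}(1-\delta_0)$, yields $\int_{\Om_L}|\nabla\omega|^2+\mfrakd\,\omega^2\,d\rx\ge \delta_0\int_{\Om_L}|\nabla\omega|^2\,d\rx$; the ellipticity bound \eqref{3-a3} handles the $\zeta$-part, giving \eqref{poincareest}. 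A further application of the standard Poincar\'e inequality (again using that $\zeta,\omega$ vanish on a portion of the boundary) upgrades the gradient bound to a full $H^1$ bound, so $\mfrakL$ is coercive with a constant depending only on the data and $\delta_0$.

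Finally I would check continuity of the functional $\langle(\mff,{\bm\mfF}^*,\mfg^*,{\bf\mfG}^*,g_{en}),(\zeta,\omega)\rangle$: the interior terms $\int\mff\zeta$, $\int\mfg^*\omega$, $\int{\bm\mfF}^*\cdot\nabla\zeta$, $\int{\bf\mfG}^*\cdot\nabla\omega$ are bounded by H\"older's inequality and the Poincar\'e inequality, while the boundary term $\int_{\Gamen}({\mfF}_1^*\zeta-g_{en}\omega)\,dx_2$ is controlled by the trace inequality $\|\cdot\|_{L^2(\Gamen)}\le C\|\cdot\|_{H^1(\Om_L)}$, so the functional is bounded on $\mcl{H}$ by $\hat C\|(\zeta,\omega)\|_{H^1(\Om_L)}(\|(\mff,\mfg^*,{\bf\mfG}^*,{\bm\mfF}^*)\|_{C^0(\ol{\Om_L})}+\|g_{en}\|_{C^0(\ol{\Gamen})})$. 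Lax--Milgram then produces the unique weak solution and the bound \eqref{3-c2}. The main obstacle is really the first step: without the identity $\mfrakb_i+\mfrakc_i=0$ from Lemma \ref{lemma-1}(c), the mixed first-order terms would not cancel in the quadratic form and the system would not obviously be coercive; the entire argument hinges on that structural feature of the linearized Euler--Poisson system, together with the smallness condition \eqref{estd0} that makes $1-\tfrac{L^2}{2}\|\mfrakd\|_{C^0}$ positive.
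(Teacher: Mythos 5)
Your proposal is correct and follows essentially the same route as the paper: the diagonal test with the cancellation $\mfrakb_i+\mfrakc_i=0$ from Lemma \ref{lemma-1}(c), the weighted Poincar\'e inequality \eqref{p-ineq} combined with Lemma \ref{lemma-1}(d)/\eqref{estd0} to absorb the $\mfrakd\,\omega^2$ term, coercivity via the standard Poincar\'e inequality, boundedness of the functional via H\"older and trace inequalities, and Lax--Milgram. (The parenthetical appeal to $x_2$-independence of the coefficients is unnecessary---the cancellation is pointwise---but harmless.)
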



\begin{lemma}
\label{lemma-5}
For any $\alp\in(0,1)$, there exists a constant  $C_B>0$ depending only on the background data, $\delta_0$ and $\alp$ such that whenever $({\bf\mfF}^*,{\bf\mfG}^*, g_{en}) \in [C^{\alp}(\ol{\Om_L},\R^2)]^4\times C^{\alp}(\ol{\Gamen})$,
the weak solution $(\mfu,\mfV)\in\mcl{H}$ of \eqref{3-b5}--\eqref{3-b5bc2}
satisfy
\begin{equation}
\label{3-c7}
\|(\mfu,\mfV)\|_{1,\alp,\Om_L}\le C_B\mcl{M}(\mff,{\bf \mfF^*},\mfg^*, {\bf \mfG}^*, g_{en})
\end{equation}
for
\begin{equation}
\label{3-c7-2}
\mcl{M}(\mff,{\bf \mfF^*},\mfg^*, {\bf \mfG}^*, g_{en})=\|(\mff,\mfg^*)\|_{L^{\infty}(\Om_L)}+
\|({\bf \mfF}^*,{\bf \mfG}^*)\|_{\alp,\Om_L}+ \|g_{en}\|_{\alp,\Gamen}.
\end{equation}
\begin{proof}
This lemma can be proved by adjusting arguments in \cite{BDX}, so we outline the idea of proof. For details, one can refer to \cite[Lemmas 3.5 and 3.6]{BDX}.

Step 1. One can adjust the proof of \cite[Lemma 3.5]{BDX} to find constants $R\in(0,\frac{\min\{1,L\}}{10}]$ and $C>0$ depending only the background data, $\delta_0$ and $\alp$ so that
\begin{equation}
\label{3-d1}
\int_{B_r(\rx_0)\cap\Om_L}|\nabla\mfu|^2+|\nabla\mfV|^2\;d\rx\le Cr^{2\alp}[\mcl{M}(\mff,{\bf \mfF}^*, \mfg^*, {\bf \mfG}^*, g_{en})]^2
\end{equation}
holds for all $\rx_0\in\ol{\Om_L}$ and $r\in[0,R]$.
Combining \eqref{3-d1} with Lemma \ref{lemma-3} gives
\begin{equation}
\label{3-d2}
\|\mfu\|_{\alp,\Om_L}+\|\mfV\|_{\alp,\Om_L}\le C\mcl{M}(\mff,{\bm \mfF^*},\mfg^*, {\bf \mfG}^*, g_{en}).
\end{equation}

Step 2. Substituting $\omega=0$ into \eqref{3-b7}, $\mfu\in \{\zeta\in H^1(\Om_L):\zeta|_{\Gamw\cup \Gamen}=0\}(=:\mcl{H}_1)$ is regarded as a solution to
\begin{equation}
\label{3-d10}
\int_{\Om_L}\sum_{i=1}^2\mfraka_{ii}\der_i\mfu\der_i\zeta\;d\rx=
\int_{\Om_L}{\bf \mfF}^{\sharp}\cdot\nabla\zeta-\mff\zeta\;d\rx+\int_{\Gamen}{\bf \mfF}_1^{\sharp}\zeta \;dx_2\;\;
\tx{for all}\;\;\zeta\in\mcl{H}_1
\end{equation}
where we set
\begin{equation}
\label{Fsharp}
{\bf \mfF}^{\sharp} =(\mfF^\sharp_1, \mfF^\sharp_2)={\bf \mfF}^*-({\mfrakb}_1,\mfrakb_2)\mfV.
\end{equation}
We use this expression, \eqref{3-d2} and adjust the proofs of \cite[Lemma 1.41, Theorems 3.1 and 3.13]{Ha-L} to obtain
\begin{equation}
\label{3-e1}
\|\mfu\|_{1,\alp,\Om_L}\le C \mcl M(\mff,{\bf \mfF}^*, \mfg^*, {\bf \mfG}^*, g_{en})
\end{equation}
where the constant $C$ is chosen depending only on the background data, $\delta_0$ and $\alp$.

Step 3. Substituting $\zeta=0$ into \eqref{3-b7}, $\mfV\in \{\omega\in H^1(\Om):\omega|_{\Gamex}=0\}(=:\mcl{H}_2)$ is regarded as a solution to
\begin{equation*}
\int_{\Om} \nabla\mfV\cdot\nabla\omega\;d\rx=
\int_{\Om} {\bf \mfG}^*\cdot\nabla\omega-\til \mfg^*\omega\;d\rx-\int_{\Gamen} g_{en}\omega\;dx_2\;\;\tx{for all}\;\;\omega\in \mcl{H}_2
\end{equation*}
where we set
$\til \mfg^*=\mfg^*+(\mfrakc_1,\mfrakc_2)\cdot \nabla\mfu+\mfrakd \mfV$. Combining Lemma \ref{lemma-1}, \eqref{3-d2} and \eqref{3-e1} yields
\begin{equation*}
\|\til \mfg^*\|_{L^{\infty}(\Om_L)}\le C \mcl M(\mff,{\bf \mfF}^*, \mfg^*, {\bf \mfG}^*, g_{en}).
\end{equation*}
With the help of the compatibility condition \eqref{1-f10}, we can extend $\mfV$ evenly with respect to the insulated boundary near the corner points $\bar\Gamw\cap\bar\Gamex$.
Adjusting the proofs of \cite[Lemma 1.41, Theorems 3.1 and 3.13]{Ha-L} again, we obtain
\begin{equation*}
\|\mfV\|_{1,\alp,\Om_L}\le C\mcl  \mcl M(\mff,{\bf \mfF}^*, \mfg^*, {\bf \mfG}^*, g_{en}).
\end{equation*}

\end{proof}
\end{lemma}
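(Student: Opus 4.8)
The plan is to exploit that the two equations in \eqref{3-b5} are coupled only through lower-order terms — $\mfrakb_i\mfV$ in the first equation and $(\mfrakc_1,\mfrakc_2)\cdot\nabla\mfu$ in the second — so that, once a preliminary H\"older bound on $(\mfu,\mfV)$ is in hand, each equation decouples into a scalar divergence-form problem whose data are controlled by $\mcl{M}$. Accordingly, the first and main step is to upgrade the $H^1$ weak solution supplied by Lemma \ref{lemma-3} to $C^\alp$. I would prove a uniform Campanato decay estimate
\begin{equation*}
\int_{B_r(\rx_0)\cap\Om_L}\big(|\nabla\mfu|^2+|\nabla\mfV|^2\big)\,d\rx\le C r^{2\alp}\,\big[\mcl{M}(\mff,{\bm\mfF}^*,\mfg^*,{\bm\mfG}^*,g_{en})\big]^2
\end{equation*}
for all $\rx_0\in\ol{\Om_L}$ and all $r$ below some $R>0$ depending only on the background data, $\delta_0$ and $\alp$. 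By Lemma \ref{lemma-1} the leading matrix $[\mfraka_{ij}]$ is diagonal, uniformly elliptic and smooth and $\mfrakd<0$, so this decay estimate is obtained by the standard freezing-of-coefficients and comparison scheme: at interior balls one compares $(\mfu,\mfV)$ with the solution of the constant-coefficient system with coefficients frozen at $\rx_0$; near the flat faces $\Gamen,\Gamex,\Gamw$ one extends the solution by odd reflection across the Dirichlet faces and even reflection across the conormal faces (the boundary operators $\mfu_{x_1}=0$ on $\Gamen$ and $\mfV_{x_2}=0$ on $\Gamw$ being exactly conormal derivatives of the respective principal parts); and at the corners one combines these reflections. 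This is precisely the situation handled in \cite[Lemma 3.5]{BDX}, whose argument applies here after minor changes. Combined with the $H^1$ bound \eqref{3-c2}, the decay estimate yields $\|(\mfu,\mfV)\|_{\alp,\Om_L}\le C\mcl{M}$ via the Campanato characterization of $C^\alp$.

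With $(\mfu,\mfV)\in C^\alp$ available I would then decouple. Testing the weak formulation \eqref{3-b7} against $(\zeta,0)$, the function $\mfu\in\{\zeta\in H^1(\Om_L):\zeta|_{\Gamw\cup\Gamex}=0\}$ is a weak solution of the scalar equation $\sum_{i}\der_i(\mfraka_{ii}\der_i\mfu)=\mff+\Div{\bm\mfF}^{\sharp}$ with ${\bm\mfF}^{\sharp}={\bm\mfF}^*-(\mfrakb_1,\mfrakb_2)\mfV$ as in \eqref{Fsharp}, homogeneous conormal condition on $\Gamen$, and homogeneous Dirichlet condition on $\Gamw\cup\Gamex$. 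Since the previous step gives $\|{\bm\mfF}^{\sharp}\|_{\alp,\Om_L}\le C\mcl{M}$, the scalar mixed-boundary Schauder estimates obtained by adapting \cite[Lemma 1.41, Theorems 3.1 and 3.13]{Ha-L} yield $\|\mfu\|_{1,\alp,\Om_L}\le C\mcl{M}$; no corner compatibility is needed, because $\ol\Gamw\cap\ol\Gamex$ is a Dirichlet–Dirichlet corner and $\ol\Gamw\cap\ol\Gamen$ carries only homogeneous data. Next, testing \eqref{3-b7} against $(0,\omega)$, the function $\mfV\in\{\omega\in H^1(\Om_L):\omega|_{\Gamex}=0\}$ solves $\Delta\mfV=\til{\mfg}^*+\Div{\bm\mfG}^*$ with $\til{\mfg}^*=\mfg^*+(\mfrakc_1,\mfrakc_2)\cdot\nabla\mfu+\mfrakd\mfV$, Neumann conditions $\mfV_{x_1}=g_{en}$ on $\Gamen$ and $\mfV_{x_2}=0$ on $\Gamw$, and $\mfV=0$ on $\Gamex$; by Lemma \ref{lemma-1} and the previous steps $\|\til{\mfg}^*\|_{L^\infty(\Om_L)}\le C\mcl{M}$. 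The only delicate point is the corner $\ol\Gamw\cap\ol\Gamex$, where a homogeneous Neumann face meets a Dirichlet face: here I would invoke the compatibility condition \eqref{1-f10}, which through \eqref{3-b6} forces $\der_{x_2}\Psi_{bd}^*=0$ on that corner, to extend $\mfV$ evenly across $\Gamw$ near the corner, turning the problem into a pure Dirichlet problem across the reflected $\Gamex$, to which the same Schauder estimates apply; this gives $\|\mfV\|_{1,\alp,\Om_L}\le C\mcl{M}$. Adding the two bounds proves \eqref{3-c7}.

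The genuine work is Step 1 — the Campanato decay estimate up to the boundary and, in particular, at the Dirichlet–conormal corners — which is where the nontrivial adaptation of \cite{BDX} lies; once $(\mfu,\mfV)\in C^\alp$ is known, the remainder is a routine two-stage decoupling followed by two applications of standard scalar Schauder theory. One should also track the quantitative dependence on $\delta_0$: it enters only through the coercivity constant in \eqref{poincareest}, hence through the $H^1$ bound \eqref{3-c2} that anchors the Campanato iteration, so the constant $C_B$ in \eqref{3-c7} degenerates as $\delta_0\downarrow 0$, consistent with the hypothesis \eqref{estd0}.
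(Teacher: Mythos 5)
Your proposal follows essentially the same route as the paper: a Campanato-type decay estimate (as in \cite[Lemma 3.5]{BDX}) upgrading the $H^1$ solution of Lemma \ref{lemma-3} to $C^{\alp}$, followed by decoupling the system into the scalar equation for $\mfu$ with ${\bm\mfF}^{\sharp}={\bm\mfF}^*-(\mfrakb_1,\mfrakb_2)\mfV$ and the Poisson equation for $\mfV$ with $\til{\mfg}^*=\mfg^*+(\mfrakc_1,\mfrakc_2)\cdot\nabla\mfu+\mfrakd\mfV$, each handled by the scalar mixed-boundary Schauder estimates of \cite{Ha-L}, with the even reflection near $\ol\Gamw\cap\ol\Gamex$ justified by \eqref{1-f10}. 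This matches the paper's Steps 1--3, so the argument is correct and essentially identical.
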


\begin{remark}
\label{remark-1}
According to \cite[Theorem 3.13]{Ha-L}, for $\alpha\in (0, 1)$,  Lemma \ref{lemma-5} is still valid when the definition of $ \mcl M(\mff,{\bf \mfF}^*, \mfg^*, {\bf \mfG}^*, g_{en})$ in \eqref{3-c7-2} is replaced by
\begin{equation*}
 \mcl M(\mff,{\bf \mfF}^*, \mfg^*, {\bf \mfG}^*, g_{en})=
\|(\mff,\mfg^*, g_{en})\|_{L^{q}(\Om_L)}+
\|({\bf \mfF}^*, {\bf \mfG}^*)\|_{\alp,\Om_L}+\|g_{en}\|_{\alp,\Gamen}
\end{equation*}
with $q=\frac{2}{1-\alp}$. This will be used in Section \ref{section-4}.
\end{remark}
Now we are ready to prove Proposition \ref{proposition-2}.
\begin{proof}[Proof of Proposition \ref{proposition-2}]
Given $(\mff, \mfg,\mfh, {\bf \mfF})$, let $\mfF^*, \mfG^*, \mfg^*$ be given by \eqref{3-b6}.  Then, Lemmas \ref{lemma-3} and \ref{lemma-5} imply that there exists unique weak solution $(\mfu, \mfV)\in \mcl{H}$ of the problem \eqref{3-b5}--\eqref{3-b5bc2} which satisfies \eqref{3-c7}.
It follows from \eqref{3-b6} that ${\mfF}^{\sharp}$ given by \eqref{Fsharp} satisfies the estimate
\begin{equation}
\label{3-e5}
\|{\mfF}^{\sharp}\|_{1,\alp,\Om_L}^{(-\alp,\corners)}\le C\left(\|{\mfF}\|_{1,\alp,\Om_L}^{(-\alp,\corners)}+\|\mfh\|_{1,\alp,\Gamex}+
\|\Psi_{bd}\|_{2,\alp,\Gamex}^{(-1-\alp,\der\Gamex)}+\|g_{en}\|_{\alp,\Gamen}\right).
\end{equation}
According to \eqref{3-d10}, $\mfu$ is a weak solution of the problem
\begin{equation}
\label{3-e4}
\left\{
\begin{aligned}
&\sum_{i=1}^2\der_i(\mfraka_{ii}\der_i\mfu)=\Div{\bm \mfF}^{\sharp}+\mff\quad\tx{in}\quad\Om_L\\
&\mfu=0\quad\tx{on}\quad \der\Om_L\setminus \Gamen,\qquad\mfu_{x_1}=0\quad\tx{on}\quad \Gamex.
\end{aligned}
\right.
\end{equation}
Therefore, we can follow the argument of the proof of  \cite[Proposition 3.1]{BDX} to conclude that
$\mfu \in C^2(\Om_L)$ and $\mfu$ satisfies \eqref{3-e4} pointwisely in $\ol{\Om_L}\setminus \corners$.
For any fixed $\rx_0\in \ol{\Om_L}\setminus \corners$, define $d=\frac{1}{2} {\rm dist}(\rx_0,\corners)$ and a scaled function
\begin{equation*}
{\mfu}^{(\rx_0)}({\rm y}):=\frac{1}{d^{1+\alp}}
\left(\mfu(\rx_0+d{\rm y})-\mfu(\rx_0)-d\nabla\mfu(\rx_0)\cdot {\rm y}\right)
\end{equation*}
for ${\rm y}\in\{{\rm y}\in B_1(\bm 0):\rx_0+d{\rm y}\in\Om_L\}=:\mathfrak{B}_1(\rx_0)$. Applying the standard elliptic estimate \cite{GT} and  Lemma \ref{lemma-5} yields
\begin{equation*}
\|\mfu^{(\rx_0)}\|_{2,\alp,\mathfrak{B}_{1/2}(\rx_0)}\le C\left(\|{ \mfF^{\sharp}}\|_{1,\alp,\Om_L}^{(-\alp,\corners)}
+\|\mff\|_{\alp,\Om_L}\right).
\end{equation*}
This, together with \eqref{3-e5}, gives
\begin{equation}
\label{3-e6}
\|\mfu\|_{2,\alp,\Om_L}^{(-1-\alp,\corners)}\le C\left(\|{\bm \mfF}\|_{1,\alp,\Om_L}^{(-\alp,\corners)}+\|\mfh\|_{1,\alp,\Gamex}+
\|\Psi_{bd}\|_{2,\alp,\Gamex}^{(-1-\alp,\der\Gamex)}+\|g_{en}\|_{\alp,\Gamen}\right).
\end{equation}
Furthermore, it follows from \eqref{3-b6} and \eqref{3-e6} that
\begin{equation}
\label{3-e7}
\begin{split}
&\|\mfV\|_{2,\alp,\Om}^{(-1-\alp,\corners)}\\
&\le C\left(
\|{\bm \mfF}\|_{1,\alp,\Om}^{(-\alp,\corners)}
+\|\mfh\|_{1,\alp,\Gamex}+
\|\Psi_{bd}\|_{2,\alp,\Om}^{(-1-\alp,\corners)}
+\|\mfg\|_{\alp,\Om}+\|g_{en}\|_{1,\alp,\Gamen}^{(-\alp,\der\Gamen)}\right).
\end{split}
\end{equation}
The estimate constant $C$ in \eqref{3-e6} and \eqref{3-e7} depends only on the background data, $\delta_0$
 and $\alp$.

Finally,
 \eqref{3-e3}
 easily follows from \eqref{3-b4}, \eqref{3-e6} and \eqref{3-e7}.
\end{proof}

\section{Existence and uniqueness for nonlinear problems}\label{section-4}
We prove Theorem \ref{theorem-1}
by Schauder fixed point theorem. Since Eq.\eqref{2-b5} is coupled with
the transport equation \eqref{eqS}  through a derivative of $S$,
the uniqueness is proved under additional condition \eqref{1-f13}.

\medskip

Fix $\alpha\in (0, 1)$.
Given a constant $\delta>0$ to be determined later,  define
\begin{equation*}
\begin{aligned}
\mcl{K}_\delta:= \{(\phi, \Psi)\in [C^{1,\alp}(\ol{\Om_L})]^2:\phi(x_1,-1)=0,\; \der_{x_1}\phi(x_1,1)=0,\\ \|(\phi,\Psi)\|_{2,\alp,\Om_L}^{(-1-\alp,\corners)}\le \delta \}.
\end{aligned}
\end{equation*}
\begin{lemma}
\label{lemma-6}
Suppose that $\Sen\in C^{1,\alp}(\ol{\Gamen})$. There exist a constant $\delta_1>0$ such that if $\delta \le \delta_1$ and $(\til{\phi}, \til\Psi)\in \mcl{K}_\delta$, then the problem
\begin{equation}
\label{4-a5}
\nabla^\perp (\psi_0+\til\phi) \cdot \nabla S=0\quad \text{in}\,\, \Omega_L,\quad S=S_{en}\quad \text{on}\,\, \Gamma_0
\end{equation}
has a unique solution $S\in C^{1,\alp}(\ol{\Om_L})$. Furthermore,  $S$ satisfies
\begin{equation}
\label{4-a2}
\|S-S_0\|_{1,\alp,\Om_L}\le C\|S_{en}-S_0\|_{1,\alp,\Gamen}
\end{equation}
for a constant $C>0$ depending only on the background data.
\begin{proof}
Fix $(\til{\phi}, \til{\Psi})\in \mcl{K}_{\delta}$ and set $\til\psi =\psi_0+\til\phi$.
Let $\delta_1 = \frac{m_0}{4}$. If $\delta \in (0, \delta_1)$, then
\begin{equation}
\label{2-a1}
\frac 34 m_0\le \til\psi_{x_2}\le \frac 54 m_0\quad\tx{in}\quad \ol{\Om_L}.
\end{equation}
The boundary condition $\til\psi_{x_1}(x_1,\pm1)=0$ combined with \eqref{2-a1} yields
\begin{equation*}
\til\psi(0,-1)\le\til\psi(x_1,x_2)\le \til\psi(0,1)\quad\tx{in}\quad\ol{\Om_L}.
\end{equation*}
Set $I_{\til{\psi}}:=[\til{\psi}(0,-1),\; \til{\psi}(0,1)]$.
The implicit function theorem implies that for any $x_1\in[0,L]$ and $\xi\in I_{\til{\psi}}$, there exists a  unique $\pi (x_1,\xi)\in[-1,1]$ satisfying
\begin{equation*}
\til\psi(x_1,\pi(x_1,\xi))=\xi,
\end{equation*}
and $\pi(x_1,\xi)$ is continuously differentiable with respect to $x_1$ and $\xi$.  Therefore, for any $(x_1,x_2)\in\ol{\Om_L}$, there exists a unique $\theta \in[-1,1]$ such that
\begin{equation}
\label{2-a4}
\til\psi(x_1,x_2)=\til\psi(0,\theta)
\end{equation}
holds. Note that $\mathscr{F}_{(\til\psi)}(\theta):=\til\psi(0,\theta)$ is an invertible function from $[-1,1]$ onto $I_{\til{\psi}}$. Thus \eqref{2-a4} gives
\begin{equation}
\label{2-a5}
\theta(x_1,x_2)=\mathscr{F}^{-1}_{(\til\psi)}\circ \til\psi(x_1,x_2)=:
\mathscr{L}^{(\til\psi)}(x_1,x_2)\quad\tx{in}\quad\ol{\Om_L}
\end{equation}
and
\begin{equation}
\label{2-a7}
\nabla\mathscr{L}^{(\til\psi)}(x_1,x_2)=
\frac{\nabla\til\psi(x_1,x_2)}{\til\psi_{x_2}(0,\mathscr{L}^{(\til\psi)}(x_1,x_2))}.
\end{equation}
It follows from \eqref{4-a5} and \eqref{2-a5} that $S$ is  given by
\begin{equation*}
S(x_1,x_2)=S_{en}\circ\mathscr{L}^{(\til\psi)}(x_1,x_2).
\end{equation*}
 It is easy to see that $\mathscr{L}^{(\til{\psi})}$ is well defined by \eqref{2-a5} for all $(\til{\phi}, \til\Psi)\in \mcl{K}_{\delta}$ whenever $0<\delta \le \delta_1$. Moreover, by the choice of $\delta_1$ and \eqref{2-a7}, $\mathscr{L}^{(\til{\psi})}$ satisfies the estimate
\begin{equation}
\label{4-a4}
\|\mathscr{L}^{(\til{\psi})}\|_{1,\alp,\Om_L}\le Cm_0.
\end{equation}
 Since $S_0$ is a constant, $S-S_0$ can be written as
$
S-S_0=(\Sen-S_0)\circ \mathscr{L}^{(\til{\psi})}$ in $\Om_L$. Thus the estimate \eqref{4-a2} is a direct consequence of \eqref{4-a4}.
\end{proof}
\end{lemma}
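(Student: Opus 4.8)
The plan is to solve the transport equation~\eqref{4-a5} by the classical method of characteristics, exploiting the fact that $S$ must be constant along the level curves of the (perturbed) stream function $\til\psi=\psi_0+\til\phi$. The key structural observation is that, since $\nabla^\perp\til\psi\cdot\nabla S=0$, the function $S$ is a function of $\til\psi$ alone; the task is then to reconstruct $S$ from its boundary values on $\Gamen$ by tracking which entrance level set each point of $\Om_L$ belongs to.

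First I would fix $(\til\phi,\til\Psi)\in\mcl{K}_\delta$ and choose $\delta_1=\tfrac{m_0}{4}$ so that for $\delta\le\delta_1$ the weighted bound $\|\til\phi\|_{2,\alp,\Om_L}^{(-1-\alp,\corners)}\le\delta$ forces $|\der_{x_2}\til\phi|\le\tfrac{m_0}{4}$, hence the two-sided bound $\tfrac34 m_0\le\til\psi_{x_2}\le\tfrac54 m_0$ in $\ol{\Om_L}$; in particular $\til\psi$ is strictly increasing in $x_2$ on every vertical slice. Combined with the boundary condition $\til\psi_{x_1}(x_1,\pm1)=0$ (which comes from $\der_{x_1}\til\phi=0$ on $\Gamw$ together with $\der_{x_1}\psi_0=0$), this shows the top and bottom walls are themselves level curves of $\til\psi$, so the range of $\til\psi$ on $\ol{\Om_L}$ equals the interval $I_{\til\psi}:=[\til\psi(0,-1),\til\psi(0,1)]$ and every level set of $\til\psi$ meets $\Gamen$ exactly once. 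Next I would apply the implicit function theorem to the strictly monotone map $x_2\mapsto\til\psi(x_1,x_2)$: for each $x_1$ and each value $\xi\in I_{\til\psi}$ there is a unique $\pi(x_1,\xi)\in[-1,1]$ with $\til\psi(x_1,\pi(x_1,\xi))=\xi$, depending $C^1$ (indeed $C^{1,\alp}$, since $\til\psi\in C^{1,\alp}$) on its arguments, with derivatives controlled using $\til\psi_{x_2}\ge\tfrac34 m_0$. Setting $\mathscr{F}_{(\til\psi)}(\theta):=\til\psi(0,\theta)$ gives an invertible $C^{1,\alp}$ map $[-1,1]\to I_{\til\psi}$, and the composition $\mathscr{L}^{(\til\psi)}:=\mathscr{F}^{-1}_{(\til\psi)}\circ\til\psi$ assigns to each point of $\ol{\Om_L}$ the entrance height of its level curve; differentiating the identity $\til\psi(x_1,x_2)=\til\psi(0,\mathscr{L}^{(\til\psi)}(x_1,x_2))$ yields formula~\eqref{2-a7} for $\nabla\mathscr{L}^{(\til\psi)}$, and the uniform bounds on $\til\psi_{x_2}$ together with $\|\til\psi\|_{1,\alp,\Om_L}\le C$ give the estimate~\eqref{4-a4}, $\|\mathscr{L}^{(\til\psi)}\|_{1,\alp,\Om_L}\le Cm_0$.

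With $\mathscr{L}^{(\til\psi)}$ in hand, I would \emph{define} $S:=S_{en}\circ\mathscr{L}^{(\til\psi)}$ and verify it solves~\eqref{4-a5}: on $\Gamen$ we have $\mathscr{L}^{(\til\psi)}(0,x_2)=x_2$ so $S=S_{en}$ there, and since $\mathscr{L}^{(\til\psi)}$ is by construction constant along level curves of $\til\psi$ (equivalently, $\nabla\mathscr{L}^{(\til\psi)}\parallel\nabla\til\psi$ by~\eqref{2-a7}), the chain rule gives $\nabla^\perp\til\psi\cdot\nabla S=(S_{en})'(\mathscr{L}^{(\til\psi)})\,\nabla^\perp\til\psi\cdot\nabla\mathscr{L}^{(\til\psi)}=0$. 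For uniqueness, any $C^{1,\alp}$ solution $\til S$ of~\eqref{4-a5} is constant on each connected level set of $\til\psi$ (integrate along the level curve, which runs from $\Gamen$ to $\Gamex$ without meeting the walls transversally because $\til\psi_{x_2}>0$), hence is determined by its trace $S_{en}$ on $\Gamen$ and must coincide with the $S$ just constructed. Finally, since $S_0$ is constant, $S-S_0=(S_{en}-S_0)\circ\mathscr{L}^{(\til\psi)}$, and the $C^{1,\alp}$ estimate~\eqref{4-a2} follows from the composition estimate for H\"older norms together with~\eqref{4-a4}.

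The only delicate point is the global geometry of the level sets — ensuring that each level curve of $\til\psi$ connects $\Gamen$ to $\Gamex$ monotonically in $x_1$ and never exits through $\Gamw$, so that the map $\mathscr{L}^{(\til\psi)}$ is globally well defined and $C^{1,\alp}$ up to the boundary; this is exactly what the uniform lower bound $\til\psi_{x_2}\ge\tfrac34 m_0$ together with the wall condition $\til\psi_{x_1}|_{\Gamw}=0$ buys us, and it is where the smallness $\delta\le\delta_1=\tfrac{m_0}{4}$ is essential. Once that is established, the H\"older bounds are routine composition and implicit-function-theorem estimates.
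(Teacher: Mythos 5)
Your proposal is correct and follows essentially the same route as the paper: the choice $\delta_1=\tfrac{m_0}{4}$ to get $\tfrac34 m_0\le\til\psi_{x_2}\le\tfrac54 m_0$, the construction of $\mathscr{L}^{(\til\psi)}$ via the implicit function theorem on the monotone map $x_2\mapsto\til\psi(x_1,x_2)$, the definition $S=S_{en}\circ\mathscr{L}^{(\til\psi)}$, and the H\"older estimate from \eqref{2-a7} and \eqref{4-a4}. Your added verification that $S$ solves \eqref{4-a5} and the uniqueness argument via constancy along level curves only make explicit what the paper leaves implicit.
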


Now we are in position to prove Theorem \ref{theorem-1}.

{\textbf{Proof of Theorem \ref{theorem-1} (a):}}
If $\delta \in (0, \frac{m_0}{4})$, then for any given $(\til{\phi},\til{\Psi})\in \mcl{K}_\delta$, the problem \eqref{4-a5} has a unique solution in $C^{1,\alp}(\ol{\Om_L})$.
Let $\til S$ be
the solution of \eqref{4-a5}. 
there exists a constant $\delta_2>0$ depending on the background data and $\alp$ such that if $\max\{\delta, \|S_{en}-S_0\|_{1,\alp,\Gamen}\}\le \delta_2$,
then
\begin{equation}
\label{4-a6}
\begin{split}
&\til{\bf F}={\bf F}(\rx,\nabla\til{\phi}, \til{\Psi},\til S),\quad \til f=f(\rx,\nabla\til{\phi}, \til{\Psi},\til S, \til S_{x_2}),\\
&\til g=g(\rx,\nabla\til{\phi}, \til{\Psi},\til S),\quad\;
\til h=h(\rx, \nabla\til{\phi}, \til S)
\end{split}
\end{equation}
are well defined in $\Om_L$ where ${\bf F}$, $f$, $g$ and $h$  are defined by \eqref{2-b10}--\eqref{2-b11} and \eqref{2-c6}. Moreover, there exists a constant $C$ depending only on the background data and $\alpha$ such that
\begin{equation}
\label{4-a7}
\begin{split}
&\|\til{\bf F}\|_{1,\alp,\Om_L}^{(-\alp,\corners)}
+\|\til f\|_{\alp,\Om_L}+\|\til g\|_{\alp,\Om_L}
\le C\left(\|\Sen-S_0\|_{1,\alp,\Gamen}+\delta\|(\til{\phi},  \til{\Psi})\|_{2,\alp,\Om_L}^{(-1-\alp,\corners)}\right),\\
&\|\til h\|_{1,\alp,\Gamex}^{(-\alp,\partial\Gamex)}\le
C\left(\|\pex-\bar p(L)\|_{1,\alp,\Gamex}^{(-\alp,\partial\Gamex)}+\|\Sen-S_0\|_{1,\alp,\Gamen}+\delta\|(\til{\phi},  \til{\Psi})\|_{2,\alp,\Om_L}^{(-1-\alp,\corners)}\right).
\end{split}
\end{equation}
By Proposition \ref{proposition-2} and \eqref{4-a7}, the boundary value problem for the elliptic system
\begin{equation}
\label{4-b4}
\left\{
\begin{aligned}
&\mclL_1(\phi,\Psi)=\til f+\Div \til{\bf F}\\
&\mclL_2(\phi,\Psi)=\til g
\end{aligned}
\right.
\quad \tx{in}\quad\Om_L
\end{equation}
with boundary conditions
\begin{equation}\label{4-b4-bc1}
\begin{aligned}
&\Psi_{x_1}=g_{en}\,\,\tx{on}\quad \Gamen,\quad \Psi=\Psi_{bd}\quad \tx{on}\quad \Gamex,\quad \Psi_{x_2}=0\quad \tx{on}\quad \Gamw
\end{aligned}
\end{equation}
and
\begin{equation}\label{4-b4-bc2}
\begin{aligned}
&\phi_{x_1}=0\quad \tx{on}\quad \Gamen,\quad  \phi=\int_{-1}^{x_2}\til h(z)\;dz\quad \tx{on}\quad \Gamex,\\
&\phi(x_1,-1)=0,\quad \phi(x_1,1)=\int_{-1}^1 \til h(z)\;dz\\
\end{aligned}
\end{equation}
has a unique solution $(\phi,\Psi)\in [C^{1,\alp}(\ol{\Om_L})\cap C^{2}(\Om_L)]^2$. Furthermore, we have
\begin{equation}
\label{4-b1}
\|(\phi, \Psi)\|_{2,\alp,\Om_L}^{(-1-\alp,\corners)}
\le \mathscr{C}^*\left(\frac{\om_1(G_{en}, \Phi_{bd},\Sen,\mathscr{B}_{en}, \pex)}{\delta}+ \|(\til{\phi},  \til{\Psi})\|_{2,\alp,\Om_L}^{(-1-\alp,\corners)}\right)\delta,
\end{equation}
where the constant $\mathscr{C}^*>0$ depends on the background data, $\delta_0$ and $\alp$.

Suppose that $\om_1(G_{en}, \Phi_{bd},\Sen,\mathscr{B}_{en}, \pex)\le \sigma$ with $\sigma>0$ to be determined. Set
$$
\delta=M\sigma
$$
for $M\ge 1$ to be determined. Then, \eqref{4-b1} implies
\begin{equation*}
\|(\phi, \Psi)\|_{2,\alp,\Om_L}^{(-1-\alp,\corners)}
\le \mathscr{C}^*(\frac 1M+M\sigma)\delta.
\end{equation*}
Choose $\sigma_1<1$ and $M>1$ as follows:
\begin{equation}
\label{choice}
M=4(\mathscr{C}^*+1),\quad \sigma_1= \min\{\frac{1}{M}, \frac{\delta_1}{M}, \frac{\delta_2}{M}, \frac{1}{M^2}, \frac{1}{2\mathscr{C^*}M}\}.
\end{equation}
Under such choices of $M$ and $\sigma_1$, if $\om_1(G_{en}, \Phi_{bd},\Sen,\mathscr{B}_{en}, \pex)\le \sigma\le \sigma_1$,
then we have
\begin{equation*}
\|(\phi, \Psi)\|_{2,\alp,\Om_L}^{(-1-\alp,\corners)}
\le \frac{\delta}{2}.
\end{equation*}
Thus we can define a mapping $\mcl{J}:\mcl{K}_{\delta}\rightarrow \mcl{K}_{\delta}$ by
\begin{equation*}
\mcl{J}(\til{\phi}, \til{\Psi})=(\phi, \Psi)
\end{equation*}
for the solution $(\phi, \Psi)$ to \eqref{4-b4}--\eqref{4-b4-bc2}.
Once we show that $\mcl{J}$ has a fixed point in $\mcl{K}_{\delta}$, then Theorem \ref{theorem-1} (a) is proved.

Suppose that a sequence $\{(\phi_k,\Psi_k)\}_{k=1}^{\infty}\subset \mcl{K}_\delta$ converges to $(\phi_{\infty},\Psi_{\infty})$ in $[C^{1,\frac{\alp}{2}}(\ol{\Om_L})]^2$, then we have $(\phi_{\infty},\Psi_{\infty})\in\mcl{K}_\delta$. For each $k\in\mathbb{N}$, set
\begin{equation}
\label{4-b6}
\begin{split}
&\mathscr{L}_k:=\mathscr{L}^{(\psi_0+\phi_k)},\;\; \mathscr{L}_{\infty}:=\mathscr{L}^{(\psi_0+\phi_{\infty})},\;\; S_k:=\Sen\circ\mathscr{L}_k,\;\; S_{\infty}:=\Sen\circ\mathscr{L}_{\infty}
\end{split}
\end{equation}
where $\mathscr{L}^{(\psi)}$ is defined by \eqref{2-a5}. By \eqref{2-a4} and \eqref{2-a5}, $\mathscr{L}_k-\mathscr{L}_{\infty}$ can be expressed as
\begin{equation}
\label{4-c2}
(\mathscr{L}_k-\mathscr{L}_{\infty})(\rx)=
\frac{(\phi_k-\phi_{\infty})(\rx)-(\phi_k-\phi_{\infty})(0,\mathscr{L}_k(\rx))}
{\int_0^1 \der_{x_2}(\psi_0+\phi_{\infty})(0,\mathscr{L}_{\infty}(\rx)
+t(\mathscr{L}_k-\mathscr{L}_{\infty})(\rx))\;dt}.
\end{equation}
Thus
$
\underset{k\to\infty}{\lim}\|\mathscr{L}_k-\mathscr{L}_{\infty}\|_{0,\Om_L}=0.
$
Furthermore, using \eqref{2-a7} gives
\begin{equation}
\label{4-b7}
\lim_{k\to\infty}\|\mathscr{L}_k-\mathscr{L}_{\infty}\|_{1,\Om_L}\le
C\lim_{k\to\infty}
\left(\left(\|\mathscr{L}_k-\mathscr{L}_{\infty}\|_{0,\Om_L}\right)^{\alp/2}
+\|\phi_k-\phi_{\infty}\|_{1,\alp/2,\Om_L}\right)=0.
\end{equation}
This, together with \eqref{4-b6}, yields
\begin{equation}
\label{4-b8}
\lim_{k\to\infty} \|S_k-S_{\infty}\|_{\alp/2,\Om_L}=\lim_{k\to\infty} \|\der_{x_2} S_k-\der_{x_2} S_{\infty}\|_{0,\Om_L}=0
\end{equation}
It follows from \eqref{2-b10}--\eqref{2-b11}, \eqref{2-c7}, \eqref{4-a6}, \eqref{4-b4}, \eqref{4-b8} and Lemma \ref{lemma-5} that we have
\begin{equation*}
\lim_{k\to\infty}\|\mcl{J}(\phi_k,\Psi_k)
-\mcl{J}(\phi_{\infty},\Psi_{\infty})\|_{1,\alp/2,\Om_L}=0.
\end{equation*}
Therefore, the mapping $\mcl{J}:\mcl{K}_\delta\to \mcl{K}_\delta$ is continuous in $[C^{1,\alp/2}(\ol{\Om_L})]^2$. Since $\mcl{K}_\delta$ is a closed, compact and convex subset of $[C^{1,\alp/2}(\ol{\Om_L})]^2$, Schauder fixed point theorem implies that $\mcl{J}$ has a fixed point $(\phi^*,\Psi^*)$ in $\mcl{K}_\delta$. Let $S^*$ be the solution to \eqref{eqS} associated with $\phi=\phi^*$. Then, $(\phi^*, \Psi^*, S^*)$ is a solution to \eqref{2-b5}, \eqref{2-b6}, \eqref{eqS}, \eqref{2-c7}, and \eqref{2-d1bc}. In addition, the estimate \eqref{4-b1} together with the choices of $M$ and $\sigma$ yields the estimate \eqref{1-f12}.
\smallskip

{\textbf{Proof of Theorem \ref{theorem-1} (b):}}
Let $(\phi_1,\Psi_1,S_1)$ and $(\phi_2,\Psi_2,S_2)$ be two solutions to \eqref{2-b5}, \eqref{2-b6}, \eqref{eqS}, \eqref{2-c7}, and \eqref{2-d1bc} which satisfy
\begin{equation}
\label{4-c4}
\|(\phi_j, \Psi_j)\|_{2,\alp,\Om_L}^{(-1-\alp,\Gamw)}+\|S_j-S_0\|_{1,\alp,\Om_L}\le C\om_1(G_{en}, \Phi_{bd},\Sen,\mathscr{B}_{en},\pex).
\end{equation}

By the assumption \eqref{2-a7-2}, we have $\om_2(\Phi_{bd},\Sen,\mathscr{B}_{en})=\|\Sen-S_0\|_{W^{2,\mu}(\Gamen)}$.
Given $\alp\in(\frac 12,1)$ and $\mu>2$, choose $\mu_1 \in (2, \min(\mu, \frac{1}{1-\alpha}))$ and denote
$
\beta=\frac 12\min(\alp,1-\frac {2}{\mu}).
$

For each $j=1,2$, $S_j$ can be represented as
\begin{equation}
\label{4-c3}
S_j(\rx)=\Sen\circ\mathscr{L}^{(\psi_0+\phi_j)}(\rx)\quad\tx{in}\quad \Om_L.
\end{equation}
For $j=1,2$, one has
\begin{align*}
&f_j=f(\rx,\nabla\phi_j,\Psi_j, S_j, \der_{x_2} S_j),\;\;
(g_j, {\bf F}_j)=(g, {\bf F})(\rx,\nabla\phi_j,\Psi_j, S_j),\;\;
h_j=h(\rx,\nabla\phi_j,S_j).
\end{align*}
Then $(\hat\phi,\hat\Psi)=(\phi_1-\phi_2,\Psi_1-\Psi_2)$ satisfy the system
\begin{equation*}
\begin{cases}
\mclL_1(\hat\phi,\hat\Psi)=(f_1-f_2)+\Div ({\bf F}_1-{\bf F}_2)\\
\mclL_2(\hat\phi,\hat\Psi)=g_1-g_2
\end{cases}\quad \tx{in}\quad\Om_L
\end{equation*}
with boundary conditions
\begin{equation*}
\begin{aligned}
&\hat{\Psi}_{x_1}=0\,\,\tx{on}\quad \Gamen,\quad \hat\Psi=0\,\,\tx{on}\quad \Gamex,\quad \hat\Psi_{x_2}=0\,\,\tx{on}\quad \Gamw,\\
&\hat\phi_{x_1}=0\,\, \tx{on}\quad \Gamen,\,\, \hat\phi(\rx)=\int_{-1}^{x_2}(h_1-h_2)(z)\;dz\,\,\tx{on}\quad \Gamw\cup\Gamex.
\end{aligned}
\end{equation*}
Suppose that $\om_1(G_{en}\Phi_{bd},\Sen,\mathscr{B}_{en},\pex)
+\om_2(\Phi_{bd},\Sen,\mathscr{B}_{en})\le \hat{\sigma}$ for $\hat{\sigma}>0$ to be determined later.
It follows from \eqref{4-c2} and \eqref{4-c3} that
\begin{equation}
\label{4-c5}
\|S_1-S_2\|_{\beta,\Om_L}\le C\hat{\sigma}\|\phi_1-\phi_2\|_{1,\beta,\Om_L},
\end{equation}
for $C>0$ depending on the background data, $\alpha$, and $\mu$.
For the rest of the section, a constant $C$ may vary but depends only on the background data, $\alpha$, and $\mu$ unless otherwise specified.
For convenience, denote $\mathscr{L}^{(\psi_0+\phi_j)}$ by $\mathscr{L}_j$. Then $\der_{x_2}(S_1-S_2)$ can be written as
\begin{equation}
\label{4-c6}
\der_{x_2}(S_1-S_2)=
\left(\Sen'(\mathscr{L}_1)-\Sen'(\mathscr{L}_2)\right)\der_{x_2}\mathscr{L}_1
+\Sen'(\mathscr{L}_2)\der_{x_2}(\mathscr{L}_1-\mathscr{L}_2).
\end{equation}
It follows from \eqref{4-c2} that
\begin{equation*}
\begin{aligned}
\|\Sen'(\mathscr{L}_1)-\Sen'(\mathscr{L}_2)\|_{L^\mu(\Om_L)}
\le& C\om_2(\Phi_{bd},\Sen,\mathscr{B}_{en})
\|\mathscr{L}_1-\mathscr{L}_2\|_{L^{\infty}(\Om_L)}\\
\le& C\hat{\sigma}\|\phi_1-\phi_2\|_{0,\Om_L}.
\end{aligned}
\end{equation*}
If $\alp>\frac 12$, then
\begin{equation}
\label{4-c8}
\|\der_{x_2}\phi_2(0,\mathscr{L}_2)-\der_{x_2}\phi_2(0,\mathscr{L}_1)\|_{L^{\mu_1}(\Om_L)}
\le C\|\phi_2\|_{2,\alp,\Om_L}^{(-1-\alp,\corners)}
\|\mathscr{L}_1-\mathscr{L}_2\|_{0,\Om_L}.
\end{equation}
Combining \eqref{4-c6}--\eqref{4-c8} with \eqref{4-c4} gives
\begin{equation*}
\|\der_{x_2}(S_1-S_2)\|_{L^{\mu_1}(\Om_L)}\le {C}\hat{\sigma}\|\phi_1-\phi_2\|_{1,\beta,\Om_L}.
\end{equation*}
This, together with \eqref{4-c5},  gives
\begin{equation}
\label{4-d2}
\begin{split}
&\|(f_1-f_2,g_1-g_2)\|_{L^{\mu_1}(\Om_L)}
+\|{\bf F}_1-{\bf F}_2\|_{\beta,\Om_L}+\|h_1-h_2\|_{\beta,\Gamex}
\le  C\hat{\sigma}
\|(\hat{\phi},\hat{\Psi})\|_{1,\beta,\Om_L}.
\end{split}
\end{equation}
Hence it follows  from Lemma \ref{lemma-5}, Remark \ref{remark-1} and \eqref{4-d2} that
\begin{equation}
\label{4-d4}
\|(\hat{\phi},\hat{\Psi})\|_{1,\beta,\Om_L}
\le C_{\natural}\hat{\sigma}
\|(\hat{\phi},\hat{\Psi})\|_{1,\beta,\Om_L}
\end{equation}
for $C_{\natural}$ depending on the background data, $\alp$, $\mu$ and $\delta_0$.
Choose $\sigma_2=\min(\sigma_1, \frac{3}{4 {C}_{\natural}})$ so that, whenever $\hat{\sigma}\le \sigma_2$, \eqref{4-d4} implies
\begin{equation*}
(\phi_1,\Psi_1)=(\phi_2,\Psi_2)\quad\tx{in}\quad \Om_L.
\end{equation*}
This finishes the proof for Theorem \ref{theorem-1}.
\hfill$\Box$

\bigskip

{\bf Acknowledgement.}
The research of Myoungjean Bae and Ben Duan was supported in part by Priority Research Centers Program through the National Research Foundation of Korea(NRF) (NRF Project no.2013053914). The research of Myoungjean Bae was also supported by the Basic Science Research Program(NRF-2012R1A1A1001919) and TJ Park Science Fellowship of POSCO TJ Park Foundation.
The research of Xie was supported in part by  NSFC  11201297, Shanghai Chenguang program and Shanghai Pujiang  program 12PJ1405200, and the Program for Professor of Special Appointment (Eastern Scholar) at Shanghai Institutions of Higher Learning.
Part of the work was done when Xie was visiting POSTECH and Bae and Duan visited  Shanghai Jiao Tong University. They thank these institutions for their hospitality and support during these visit.

\bigskip

\bigskip

\end{document}